\newtheorem{theorem}{Theorem}
\newtheorem{corollary}[theorem]{Corollary}
\newtheorem{definition}[theorem]{Definition}
\newtheorem{lemma}[theorem]{Lemma}
\newtheorem{remark}[theorem]{Remark}
\newenvironment{proof}[1][Proof]{\noindent\textbf{#1.} }{\ \rule{0.5em}{0.5em}}
\begin{document}

\title{Fractional Gamma process and fractional Gamma-subordinated processes}
\author{Luisa Beghin\thanks{%
Address: Department of Statistical Sciences, Sapienza University of Rome,
P.le A. Moro 5, I-00185 Roma, Italy. e-mail: \texttt{luisa.beghin@uniroma1.it%
}}}
\date{}
\maketitle

\begin{abstract}
We define and study fractional versions of the well-known Gamma subordinator
$\Gamma :=\{\Gamma (t),$ $t\geq 0\},$ which are obtained by time-changing $%
\Gamma $ by means of an independent stable subordinator or its inverse.
Their densities are proved to satisfy differential equations expressed in
terms of fractional versions of the shift operator (with fractional
parameter greater or less than one, in the two cases). As a consequence, the
fractional generalization of some Gamma subordinated processes (i.e. the
Variance Gamma, the Geometric Stable and the Negative Binomial) are
introduced and the corresponding fractional differential equations are
obtained.

\noindent \textbf{Keywords}: Gamma subordinator; Variance Gamma process;
Geometric Stable subordinator; Negative Binomial process; Fractional shift
operator.

\noindent \emph{AMS Mathematical Subject Classification (2010).} 60G52,
34A08, 33E12, 26A33.
\end{abstract}

\section{Introduction and preliminaries}

The Gamma subordinator $\Gamma (t),t>0$ is a very well-known process,
applied to many different fields such as, for example, engineering
reliability, maintenance theory, risk theory, option pricing and so on. It
can be considered as a particular case of the tempered stable subordinators
and thus used in financial modelling (see e.g. \cite{CON}).

We define here two fractional versions of $\Gamma $, obtained by a random
time-change by means of an independent stable subordinator $\mathcal{A}%
_{1/\nu }$, or, alternatively, the inverse stable subordinator $\mathcal{L}%
_{\nu }$ (see section 2.2 for their exact definitions). Thus we define, for
any $t\geq 0,$%
\begin{equation}
\left\{
\begin{array}{l}
\Gamma _{\nu }(t):=\Gamma (\mathcal{L}_{\nu }(t)),\quad 0<\nu <1 \\
\overline{\Gamma }_{\nu }(t):=\Gamma (\mathcal{A}_{1/\nu }(t)),\quad \nu >1%
\end{array}%
\right.  \label{pr6}
\end{equation}%
(the case $\nu =1$ corresponds to the standard Gamma process $\Gamma (t)$).
Only for $\nu >1$, $\overline{\Gamma }_{\nu }$ represents itself a L\'{e}vy
process, since it is obtained by subordinating a L\'{e}vy process to a
stable subordinator.

The processes defined in (\ref{pr6}) can be considered as fractional
versions of $\Gamma $, since we prove that, in both cases, their
distributions satisfy differential equations expressed in terms of a new
operator, that we call "fractional shift operator". We recall the definition
of the (integer order) shift operator: let $D_{x}^{n}:=d^{n}/dx^{n}$, for
any $n\in \mathbb{N}$, then%
\begin{equation}
e^{cD_{x}}f(x):=\sum_{n=0}^{\infty }\frac{c^{n}D_{x}^{n}}{n!}f(x)=f(x+c),
\label{shi}
\end{equation}%
for any analytic function $f:\mathbb{R}\rightarrow \mathbb{R}$ and $c\in
\mathbb{R}.$

The fractional counterpart of (\ref{shi}) is obtained by replacing the $n$%
-th order derivative by the $n$-fold iterated fractional derivative. We will
adopt the Caputo definition of fractional derivative, for $0<\nu \leq 1$,
i.e.%
\begin{equation}
D_{x}^{\nu }u(x):=\frac{1}{\Gamma (1-\nu )}\int_{0}^{x}\frac{1}{(x-s)^{\nu }}%
\frac{d}{ds}u(s)ds,  \label{ca}
\end{equation}%
while, for $\nu \geq 1$, we use the right sided fractional Riemann-Liouville
derivative on $\mathbb{R}^{+},$ i.e.%
\begin{equation}
\mathcal{D}_{-,x}^{\nu }u(t):=\frac{1}{\Gamma (m-\nu )}\left( -\frac{d}{dx}%
\right) ^{m}\int_{x}^{+\infty }\frac{1}{(s-x)^{1+\nu -m}}u(s)ds\text{,\qquad
}m-1<\nu <m,  \label{rl}
\end{equation}%
for $x>0$ (see (2.2.4) of \cite{KIL}, p.80).

Correspondingly we define the two operators $\mathcal{O}_{c,x}^{\nu },$ for $%
\nu \leq 1,$ and $\overline{\mathcal{O}}_{c,x}^{\nu },$ for $\nu \geq 1,$ as
fractional variants of (\ref{shi}) (see Definitions 1 and 2 below).

A generalized exponential operator of fractional order has been presented in
\cite{DATT} (see also \cite{BABU}): when applied to power functions, i.e. $%
f(x)=x^{k},$ $k\in \mathbb{N}$, it is proved to produce the so-called
Hermite-Kamp\'{e} de Feri\'{e}t polynomials. More recently, another
fractional version of (\ref{shi}) has been proposed in \cite{MIS} (where the
exponential is replaced by the Mittag-Leffler function).

By means of the operators $\mathcal{O}_{c,x}^{\nu }$ and $\overline{\mathcal{%
O}}_{c,x}^{\nu }$, we obtain, in the next section, the fractional
differential equations satisfied by the one-dimensional distributions of $%
\Gamma _{\nu }$ and $\overline{\Gamma }_{\nu }$, in the two ranges of the
fractional parameter $\nu $. The expedience of using (\ref{ca}) and (\ref{rl}%
) in the definitions of $\mathcal{O}_{c,x}^{\nu }$ and $\overline{\mathcal{O}%
}_{c,x}^{\nu }$ for $\nu <1$ and $\nu >1$, respectively, has been suggested
by the results on stable subordinators and their inverse: indeed it is known
that the law of $\mathcal{A}_{1/\nu }$ (that we will denote as $h_{1/\nu
}(x,t)$) satisfies the following fractional equation of order $\nu >1$%
\begin{equation*}
\mathcal{D}_{-,\,t}^{\nu }h_{1/\nu }=\frac{\partial }{\partial x}h_{1/\nu
},\quad \;x,\text{ }t\geq 0,\;h_{1/\nu }(x,0)=\delta (x)
\end{equation*}%
(with other appropriate initial conditions, see \cite{DOV} and \cite{BEG1}
for details). On the other hand the density of the process $\mathcal{L}_{\nu
}(t):=\inf \{z:\mathcal{A}_{\nu }(z)>t\}$ (denoted hereafter as $l_{\nu
}(x,t)$) satisfies the fractional equation of order $\nu <1$
\begin{equation*}
D_{\,t}^{\nu }l_{\nu }=-\frac{\partial }{\partial x}l_{\nu },\quad \;x,\text{
}t\geq 0,\text{ }l_{\nu }(x,0)=\delta (x),
\end{equation*}%
(see \cite{HAH}).

Moreover, we prove that the one-dimensional distributions of the processes
in (\ref{pr6}) satisfy, alternatively, a differential equation expressed in
terms of the fractional version of the operator $\mathcal{P}_{c,x}$, defined
as%
\begin{equation}
\mathcal{P}_{c,x}f(x):=\sum_{j=1}^{\infty }\frac{(-1)^{j+1}}{jc^{j}}%
D_{x}^{j}f(x),\text{ \qquad }c,x\in \mathbb{R}\text{,}  \label{a}
\end{equation}%
for any infinitely differentiable function $f.$

As a consequence of all the previous results, we derive in Section 3 the
differential equations satisfied by the fractional versions of the following
Gamma subordinated processes: the Variance Gamma process, the Geometric
Stable subordinator and the Negative Binomial process.

The Variance Gamma (hereafter VG) process (alternatively defined as Laplace
motion) is obtained by subordinating a Brownian motion to an independent
Gamma subordinator:%
\begin{equation*}
X(t):=B(\Gamma (t)),\qquad t\geq 0,
\end{equation*}%
where $B$ is a standard Brownian motion. The VG process is a particular case
of a symmetric geometric $\nu $-stable process, for $\nu =2,$ and it is
widely used in the financial theory, in order to model the logarithm of
stock prices (see, e.g. \cite{MAD}, \cite{KOT}). It has been already
proposed a fractional version of the VG process in \cite{KOZ}, defined by
subordinating a fractional Brownian motion $B_{H}$ to an independent Gamma
subordinator, i.e. as $X_{H}(t):=B_{H}(\Gamma (t)),$ where $H\in (0,1)$ is
the Hurst exponent. This process is useful to model hydraulic conductivity
fields in geophysics, as well as financial time series. We propose here
different fractional versions of $X$, defined as $X_{\nu }(t):=B(\Gamma
_{\nu }(t)),$ $t\geq 0,$ for $\nu <1$, and $\overline{X}_{\nu }(t):=B(%
\overline{\Gamma }_{\nu }(t)),$ $t\geq 0$, for $\nu >1$. Again, in the last
case, we get a L\'{e}vy process and the corresponding L\'{e}vy symbol is
obtained. Moreover, by definition, it is clear that the marginal
distributions of the fractional VG processes are scales mixtures of normal
laws: indeed it is%
\begin{equation}
X_{\nu }(t)\overset{d}{=}\Gamma _{\nu }(t)Z,\text{ for }\nu <1\text{ and }%
\overline{X}_{\nu }(t)\overset{d}{=}\overline{\Gamma }_{\nu }(t)Z,\text{ for
}\nu >1\text{,}  \label{re}
\end{equation}%
where $Z$ is a standard Gaussian variable and $\overset{d}{=}$ denotes the
equality of one-dimensional distributions. By comparing (\ref{re}) with
formula (1.5) of \cite{KOZ}, we can note that here $\Gamma _{\nu }$ and $%
\overline{\Gamma }_{\nu }$ play the same role of the generalized Gamma (or
Amoroso) random process $G_{t}^{2H}$ process (whose law is reported there in
(2.1)). Thus they can represent the stochastic variance or volatility, in
financial terms.

Also the Geometric Stable (hereafter GS) subordinator is widely studied and
applied, especially in financial contexts (see \cite{KOZ3}); it is one of
the special subordinators for which the potential measure has a decreasing
density, thus a wide potential theory has been established for it (see \cite%
{BOG}, \cite{SIK}). The GS process is defined as a stable subordinator
time-changed by means of a Gamma process (see (\ref{gs}) below). The
differential equation satisfied by its density has been obtained in \cite%
{BEG} and we generalize it to the fractional case.

The Negative Binomial (hereafter NB) process is a discrete valued process,
which can be defined, alternatively, as a compound Poisson process with
logarithmic jumps or as a mixed Poisson process (i.e. a Poisson process
subordinated to an independent Gamma subordinator, see, e.g., \cite{KOZ2}).
Through the first definition, a fractional version of the NB process has
been introduced in \cite{BEG2} and the corresponding densities are proved to
solve fractional recursive differential equations, which generalize the
Kolmogorov ones. By exploiting the mixing representation, we obtain here
alternative differential equations, involving the fractional shift operators.

\section{Fractional Gamma processes}

We first recall the following preliminary result, proved in \cite{BEG}: the
one-dimensional distribution of the Gamma subordinator $\Gamma (t),t\geq 0,$
of parameter $b>0,$ i.e.%
\begin{equation}
f_{\Gamma }(x,t):=\Pr \left\{ \Gamma (t)\in dx\right\} =\left\{
\begin{array}{l}
\frac{b^{t}}{\Gamma (t)}x^{t-1}e^{-bx},\qquad x\geq 0 \\
0,\qquad x<0%
\end{array}%
\right. ,  \label{gam}
\end{equation}%
satisfies the following Cauchy problem, for $x,t\geq 0,$%
\begin{equation}
\left\{
\begin{array}{l}
\frac{\partial }{\partial x}f_{\Gamma }=-b(1-e^{-\partial _{t}})f_{\Gamma }
\\
f_{\Gamma }(x,0)=\delta (x) \\
\lim_{|x|\rightarrow +\infty }f_{\Gamma }(x,t)=0%
\end{array}%
\right. ,  \label{res}
\end{equation}%
where $e^{-\partial _{t}}$ is the partial derivative version of the shift
operator defined in (\ref{shi}) and $\delta (x)$ is the Dirac delta function.

Then we need to introduce the definition of the \textit{fractional shift
operators}, for the two cases $\nu \leq 1$ and $\nu \geq 1.$

\begin{definition}
Let $f:\mathbb{R}^{+}\rightarrow \mathbb{R}$ be a continuous function with
fractional derivative $D_{x}^{\nu }$ defined in (\ref{ca}), for $\nu \in
(0,1],$ then%
\begin{equation}
\mathcal{O}_{c,x}^{\nu }f(x):=\sum_{n=0}^{\infty }\frac{c^{n}}{n!}%
\underbrace{D_{x}^{\nu }...D_{x}^{\nu }}_{n-times}f(x),  \label{fr1}
\end{equation}%
provided that the series converges.
\end{definition}

\

\begin{definition}
Let $f:\mathbb{R}^{+}\rightarrow \mathbb{R}$ be a continuous function with
fractional derivative $\mathcal{D}_{-,x}^{\nu }$ defined in (\ref{rl}), for $%
\nu \geq 1,$ then
\begin{equation}
\overline{\mathcal{O}}_{c,x}^{\nu }f(x):=\sum_{n=0}^{\infty }\frac{(-c)^{n}}{%
n!}\underbrace{\mathcal{D}_{-,x}^{\nu }...\mathcal{D}_{-,x}^{\nu }}%
_{n-times}f(x),  \label{fr2}
\end{equation}%
provided that the series converges.
\end{definition}

The semigroup property does not hold for the fractional derivatives $%
D_{x}^{\nu }$ and $\mathcal{D}_{-,x}^{\nu }$ and thus for the operators (\ref%
{fr1}) and (\ref{fr2}) cannot be used the formalism $e^{cD_{x}^{\nu }}$
adopted in \cite{DATT}.

It is easy to check that, for $\nu =1$, the fractional shift operator
defined in (\ref{fr1}) coincides with the standard shift operator in (\ref%
{shi}): indeed we get%
\begin{equation*}
\mathcal{O}_{c,x}^{1}f(x)=\sum_{n=0}^{\infty }\frac{c^{n}}{n!}%
D_{x}^{n}f(x)=e^{cD_{x}}f(x).
\end{equation*}%
On the other hand, for $\nu =1$, formula (\ref{fr2}) reduces to%
\begin{equation*}
\overline{\mathcal{O}}_{c,x}^{1}f(x):=\sum_{n=0}^{\infty }\frac{(-c)^{n}}{n!}%
(-1)^{n}D_{x}^{n}f(x)=e^{cD_{x}}f(x),
\end{equation*}%
since $\mathcal{D}_{-,x}^{n}=(-1)^{n}D_{x}^{n}$ (see (2.2.5) of \cite{KIL}).

We note that (\ref{fr1}) and (\ref{fr2}) do not coincide with the fractional
analogue of the Taylor's series expansion introduced in \cite{OSL} and its
generalizations presented in \cite{TRUJ}, \cite{JUM}.

\subsection{The case $\protect\nu <1$: the fractional Gamma process}

We consider now the first fractional Gamma process, defined as%
\begin{equation}
\Gamma _{\nu }(t):=\Gamma (\mathcal{L}_{\nu }(t))\text{,\qquad }\nu \in
(0,1],\text{ }t\geq 0,  \label{pr}
\end{equation}%
where $\Gamma $ is a Gamma process independent of $\mathcal{L}_{\nu }$ and
by $\mathcal{L}_{\nu }$ we denote the inverse of a stable subordinator $%
\mathcal{A}_{\nu }$ of index $\nu $ (with parameters $\mu =0,$ $\beta =1,$ $%
\sigma =(t\cos \pi \nu /2)^{1/\nu }$, in the notation of \cite{SAMO}). Thus,
by definition, $\mathcal{L}_{\nu }(t):=\inf \left\{ z\geq 0:\mathcal{A}_{\nu
}(z)>t\right\} \ $and we recall that
\begin{equation}
\mathbb{E}e^{-k\mathcal{L}_{\nu }(t)}=E_{\nu ,1}(-kt^{\nu }),\qquad k>0,
\label{elle2}
\end{equation}%
where
\begin{equation*}
E_{\nu ,\beta }(x)=\sum_{j=0}^{\infty }\frac{x^{j}}{\Gamma (\nu j+\beta )}%
,\quad \mathcal{R}(\nu )>0,\text{ }\beta ,x\in \mathbb{C}
\end{equation*}%
is the Mittag-Leffler function. We start by deriving the fractional equation
satisfied by the one-dimensional distribution of the process $\Gamma _{\nu }$
defined in (\ref{pr}).

\begin{theorem}
Let $f_{\Gamma _{\nu }}(x,t):=\Pr \{\Gamma _{\nu }(t)\in dx\}$, for $x,t\geq
0$, and $l_{\nu }(x,t):=\Pr \{\mathcal{L}_{\nu }(t)\in dx\},$ then the
density%
\begin{equation}
f_{\Gamma _{\nu }}(x,t)=\int_{0}^{\infty }f_{\Gamma }(x,z)l_{\nu }(z,t)dz
\label{pr3}
\end{equation}%
satisfies, for $\nu \in (0,1)$ and $t>1$, the following equation%
\begin{equation}
\frac{\partial }{\partial x}f_{\Gamma _{\nu }}=-b(1-\mathcal{O}_{-1,t}^{\nu
})f_{\Gamma _{\nu }},\qquad x\geq 0,  \label{pr4}
\end{equation}%
with initial condition%
\begin{equation}
f_{\Gamma _{\nu }}(0,t)=0.  \label{pr5}
\end{equation}

\begin{proof}
The initial condition is immediately satisfied by (\ref{pr3}), since $%
f_{\Gamma }(0,t)=0,$ for $t>1.$ In order to verify equation (\ref{pr4}) we
evaluate the Laplace transform of (\ref{pr3}), with respect to $x$, by
denoting $\widetilde{f}_{\Gamma _{\nu }}(\theta ,t):=\int_{0}^{\infty
}e^{-\theta x}f_{\Gamma _{\nu }}(x,t)dx,$%
\begin{eqnarray}
\widetilde{f}_{\Gamma _{\nu }}(\theta ,t) &=&\int_{0}^{\infty }\widetilde{f}%
_{\Gamma }(\theta ,z)l_{\nu }(z,t)dz  \label{sub} \\
&=&\int_{0}^{\infty }\exp \left\{ -z\log \left( 1+\frac{\theta }{b}\right)
\right\} l_{\nu }(z,t)dz  \notag \\
&=&[\text{by (\ref{elle2}) with }k=\log [1+\theta /b)]>0]  \notag \\
&=&E_{\nu ,1}\left( -\log \left( 1+\frac{\theta }{b}\right) t^{\nu }\right) .
\notag
\end{eqnarray}%
By taking the Laplace transform of the r.h.s. of (\ref{pr4}) we get%
\begin{eqnarray}
&&-b(1-\mathcal{O}_{-1,t}^{\nu })\widetilde{f}_{\Gamma _{\nu }}(\theta ,t)
\label{rs} \\
&=&-bE_{\nu ,1}\left( -\log \left( 1+\frac{\theta }{b}\right) t^{\nu
}\right) +b\sum_{n=0}^{\infty }\frac{(-1)^{n}}{n!}\underbrace{D_{t}^{\nu
}...D_{t}^{\nu }}_{n-times}E_{\nu ,1}\left( -\log \left( 1+\frac{\theta }{b}%
\right) t^{\nu }\right)  \notag \\
&=&[\text{by n applications of formula (2.4.58) of \cite{KIL}]}  \notag \\
&=&-bE_{\nu ,1}\left( -\log \left( 1+\frac{\theta }{b}\right) t^{\nu
}\right) +bE_{\nu ,1}\left( -\log \left( 1+\frac{\theta }{b}\right) t^{\nu
}\right) \sum_{n=0}^{\infty }\frac{\left( \log \left( 1+\frac{\theta }{b}%
\right) \right) ^{n}}{n!}  \notag \\
&=&-b\left[ 1-1-\frac{\theta }{b}\right] E_{\nu ,1}\left( -\log \left( 1+%
\frac{\theta }{b}\right) t^{\nu }\right) =\theta E_{\nu ,1}\left( -\log
\left( 1+\frac{\theta }{b}\right) t^{\nu }\right) ,  \notag
\end{eqnarray}%
which coincides with the Laplace transform of the l.h.s. of (\ref{pr4}),
i.e.
\begin{eqnarray}
\theta \widetilde{f}_{\Gamma _{\nu }}(\theta ,t) &=&\theta E_{\nu ,1}\left(
-\log \left( 1+\frac{\theta }{b}\right) t^{\nu }\right) -f_{\Gamma _{\nu
}}(0,t)  \label{ls} \\
&=&[\text{by (\ref{pr5}) }]  \notag \\
&=&\theta E_{\nu ,1}\left( -\log \left( 1+\frac{\theta }{b}\right) t^{\nu
}\right) .  \notag
\end{eqnarray}
\end{proof}
\end{theorem}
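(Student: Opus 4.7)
My plan is to verify (\ref{pr4}) by Laplace transform in the space variable $x$, which reduces the identity to an eigenfunction calculation for the Mittag-Leffler function. First I would dispose of the initial condition (\ref{pr5}) by inspection of (\ref{pr3}) together with (\ref{gam}): for $t>1$ the Gamma density $f_\Gamma(0,z)$ vanishes pointwise in $z$ (since $z^{t-1}$ kills the integrand at $x=0$), so $f_{\Gamma_\nu}(0,t)=0$ follows directly from the mixture representation.

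Next I would compute $\widetilde f_{\Gamma_\nu}(\theta,t):=\int_0^\infty e^{-\theta x}f_{\Gamma_\nu}(x,t)\,dx$. Using (\ref{pr3}) and the elementary fact that $\widetilde f_\Gamma(\theta,z)=(1+\theta/b)^{-z}=\exp\{-z\log(1+\theta/b)\}$, together with (\ref{elle2}) applied with $k=\log(1+\theta/b)>0$, I obtain
\begin{equation*}
\widetilde f_{\Gamma_\nu}(\theta,t)=E_{\nu,1}\!\left(-\log(1+\theta/b)\,t^\nu\right).
\end{equation*}
The Laplace transform of the left-hand side of (\ref{pr4}) is then $\theta\widetilde f_{\Gamma_\nu}(\theta,t)$, by the initial condition. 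So the theorem reduces to checking that the operator $-b(1-\mathcal{O}_{-1,t}^\nu)$, acting in the time variable on this Mittag-Leffler function, returns the multiplier $\theta$.

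For the key step I would invoke formula (2.4.58) of \cite{KIL}, which says that $E_{\nu,1}(\lambda t^\nu)$ is an eigenfunction of the Caputo derivative $D_t^\nu$ with eigenvalue $\lambda$. Iterating $n$ times produces the factor $\lambda^n$, so summing the defining series (\ref{fr1}) of $\mathcal{O}_{-1,t}^\nu$ yields
\begin{equation*}
\mathcal{O}_{-1,t}^\nu E_{\nu,1}(\lambda t^\nu)=\sum_{n=0}^\infty\frac{(-1)^n\lambda^n}{n!}\,E_{\nu,1}(\lambda t^\nu)=e^{-\lambda}E_{\nu,1}(\lambda t^\nu).
\end{equation*}
With $\lambda=-\log(1+\theta/b)$ this gives $e^{-\lambda}=1+\theta/b$, so $-b(1-\mathcal{O}_{-1,t}^\nu)\widetilde f_{\Gamma_\nu}=-b[1-(1+\theta/b)]\widetilde f_{\Gamma_\nu}=\theta\widetilde f_{\Gamma_\nu}$, which is the Laplace transform of $\partial_x f_{\Gamma_\nu}$. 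Uniqueness of the Laplace transform in $x$ then yields (\ref{pr4}).

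The main obstacle I anticipate is the legitimacy of this eigenfunction step, since the fractional Caputo derivative fails the semigroup property and the operator $\mathcal{O}_{-1,t}^\nu$ is only defined provided its defining series converges. The eigenfunction identity from (2.4.58) of \cite{KIL} circumvents the semigroup issue, because one does not need $D_t^{n\nu}=(D_t^\nu)^n$; one only needs the single-step eigenfunction relation applied $n$ times. The resulting series $\sum_n(\log(1+\theta/b))^n/n!$ converges absolutely to $1+\theta/b$, justifying the termwise manipulation and giving an a posteriori proof that $\mathcal{O}_{-1,t}^\nu\widetilde f_{\Gamma_\nu}$ is well defined. Beyond that, the argument is purely algebraic.
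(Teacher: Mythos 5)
Your proposal is correct and follows essentially the same route as the paper's own proof: Laplace transform in $x$ reduces the claim to the Mittag-Leffler eigenfunction identity (formula (2.4.58) of \cite{KIL}), iterated $n$ times and summed to produce the factor $1+\theta/b$, with the initial condition handled by inspection of (\ref{gam}). Your remark that the iterated single-step eigenfunction relation sidesteps the failure of the semigroup property is exactly the implicit justification in the paper's computation (\ref{rs}).
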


\begin{remark}
We can alternatively prove that the density (\ref{pr3}) satisfies the
following time-fractional equation, for any $\nu \in (0,1)$ and $t>1$:%
\begin{equation}
D_{t}^{\nu }f_{\Gamma _{\nu }}=-\mathcal{P}_{b,x}f_{\Gamma _{\nu }},\qquad
x\geq 0,  \label{re5}
\end{equation}%
with initial conditions%
\begin{equation}
\left. \frac{\partial ^{j}}{\partial x^{j}}f_{\Gamma _{\nu
}}(x,t)\right\vert _{x=0}=0,\qquad j=0,1,...  \label{re6}
\end{equation}%
where $D_{t}^{\nu }$ denotes, as usual, the Caputo fractional derivative.
Indeed by taking the Laplace transform of (\ref{re5}) we get
\begin{eqnarray}
\mathcal{L}\left\{ D_{t}^{\nu }f_{\Gamma _{\nu }}(\cdot ,t);\theta \right\}
&=&D_{t}^{\nu }E_{\nu ,1}\left( -\log \left( 1+\frac{\theta }{b}\right)
t^{\nu }\right)  \label{eq4} \\
&=&-\log \left( 1+\frac{\theta }{b}\right) E_{\nu ,1}\left( -\log \left( 1+%
\frac{\theta }{b}\right) t^{\nu }\right)  \notag \\
&=&-\mathcal{L}\left\{ \mathcal{P}_{b,x}f_{\Gamma _{\nu }}(\cdot ,t);\theta
\right\} ,  \notag
\end{eqnarray}%
where $\mathcal{L}\left\{ f(\cdot );\theta \right\} :=\int_{0}^{\infty
}e^{-\theta x}f_{\Gamma }(x,t)dx.$ The last equality in (\ref{eq4}) is
obtained by considering the well-known formula%
\begin{equation}
\mathcal{L}\left\{ D_{x}^{l}f(\cdot );\theta \right\} =\theta ^{l}\widetilde{%
f}(\theta )-\left. \sum_{j=0}^{l-1}\theta ^{j}D_{x}^{l-1-j}f(x)\right\vert
_{x=0}.  \label{aa}
\end{equation}%
Indeed, by the definition of $\mathcal{P}_{c,x}$ given in (\ref{a}), for $%
c=b $, we get%
\begin{eqnarray*}
\mathcal{L}\left\{ \mathcal{P}_{b,x}f_{\Gamma _{\nu }}(\cdot ,t);\theta
\right\} &=&\sum_{l=1}^{\infty }\frac{(-1)^{l+1}}{lb^{l}}\int_{0}^{\infty
}e^{-\theta x}D_{x}^{l}f_{\Gamma _{\nu }}(x,t)dx \\
&=&[\text{by (\ref{re6})}] \\
&=&\sum_{l=1}^{\infty }\frac{(-1)^{l+1}\theta ^{l}}{lb^{l}}E_{\nu ,1}\left(
-\log \left( 1+\frac{\theta }{b}\right) t^{\nu }\right) \\
&=&\log \left( 1+\frac{\theta }{b}\right) E_{\nu ,1}\left( -\log \left( 1+%
\frac{\theta }{b}\right) t^{\nu }\right) .
\end{eqnarray*}
\end{remark}

We note that the process $\Gamma _{\nu }$ is no longer a subordinator, since
it is clear from (\ref{sub}) that its density is not infinitely divisible.
To avoid this problem we present in the next section a fractional version of
the Gamma process which is still infinitely divisible, and, being
increasing, is also a subordinator.

We analyze here some properties of the process $\Gamma _{\nu }$, such as its
moments. The expected value is finite and can be obtained by taking its
Laplace transform and considering the well-known result%
\begin{equation*}
\int_{0}^{\infty }e^{-st}l_{\nu }(x,t)dt=s^{\nu -1}e^{-s^{\nu }x}.
\end{equation*}%
Indeed we get%
\begin{eqnarray*}
\mathcal{L}\left\{ \mathbb{E}\Gamma _{\nu }(t);s\right\} &=&\int_{0}^{\infty
}e^{-st}\int_{0}^{\infty }\mathbb{E}\Gamma (z)l_{\nu }(z,t)dzdt \\
&=&\frac{1}{b}\int_{0}^{\infty }z\int_{0}^{\infty }e^{-st}l_{\nu }(z,t)dtdz
\\
&=&\frac{s^{\nu -1}}{b}\int_{0}^{\infty }ze^{-xs^{\nu }}dz=\frac{1}{bs^{\nu
+1}}
\end{eqnarray*}%
and thus
\begin{equation*}
\mathbb{E}\Gamma _{\nu }(t)=\frac{t^{\nu }}{b\Gamma (\nu +1)},
\end{equation*}%
which, for $\nu =1$, reduces to the well-known expected value of the Gamma
process.

\begin{lemma}
The $r$-th absolute moments of $\Gamma _{\nu }$ are given by%
\begin{equation}
\mathbb{E}\Gamma _{\nu }(t)^{r}=\frac{1}{b^{r}}\sum_{k=0}^{r}\QATOPD[ ] {r}{k%
}\frac{k!t^{\nu k}}{\Gamma (\nu k+1)},\qquad r\in \mathbb{Z},  \label{mom}
\end{equation}%
where $\QATOPD[ ] {r}{k}$ denotes the (unsigned) Stirling numbers of the
first kind.

\begin{proof}
Recall that, for the Gamma subordinator,%
\begin{equation}
\mathbb{E}\Gamma (t)^{r}=\frac{1}{b^{r}}\frac{\Gamma (r+t)}{\Gamma (t)},
\label{lem}
\end{equation}%
so that we get%
\begin{eqnarray}
\mathcal{L}\left\{ \mathbb{E}\Gamma _{\nu }(\cdot )^{r};s\right\}
&=&\int_{0}^{\infty }e^{-st}\int_{0}^{\infty }\mathbb{E}\Gamma (z)^{r}l_{\nu
}(z,t)dzdt  \label{mom2} \\
&=&\frac{s^{\nu -1}}{b^{r}}\int_{0}^{\infty }\frac{\Gamma (r+z)}{\Gamma (z)}%
e^{-s^{\nu }z}dt  \notag \\
&=&\frac{s^{\nu -1}}{b^{r}}\int_{0}^{\infty }z^{(r)}e^{-s^{\nu }z}dt,  \notag
\end{eqnarray}%
where $z^{(r)}$ denotes the rising factorial defined as $z^{(r)}:=\Gamma
(r+z)/\Gamma (r).$ We recall the following expansion for the rising
factorials:%
\begin{equation}
z^{(r)}=\sum_{k=0}^{r}\QATOPD[ ] {r}{k}z^{k},  \label{ris}
\end{equation}%
therefore (\ref{mom2}) can be rewritten as%
\begin{eqnarray*}
\mathcal{L}\left\{ \mathbb{E}\Gamma _{\nu }(\cdot )^{r};s\right\} &=&\frac{%
s^{\nu -1}}{b^{r}}\sum_{k=0}^{r}\QATOPD[ ] {r}{k}\int_{0}^{\infty
}z^{k}e^{-s^{\nu }z}dt \\
&=&\frac{s^{\nu -1}}{b^{r}}\sum_{k=0}^{r}\QATOPD[ ] {r}{k}\frac{k!}{s^{\nu
k+\nu }}=\frac{1}{b^{r}}\sum_{k=0}^{r}\QATOPD[ ] {r}{k}\frac{k!}{s^{\nu k+1}}%
.
\end{eqnarray*}%
By inverting the Laplace transform we get (\ref{mom}).
\end{proof}
\end{lemma}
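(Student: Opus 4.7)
The plan is to leverage the subordination representation $\Gamma_\nu(t)=\Gamma(\mathcal{L}_\nu(t))$ and pass to Laplace transforms in the time variable, where the inverse-stable density $l_\nu(z,t)$ has a particularly clean form. First, by independence of $\Gamma$ and $\mathcal{L}_\nu$, conditioning on $\mathcal{L}_\nu(t)=z$ gives
\[
\mathbb{E}\Gamma_\nu(t)^r=\int_0^\infty \mathbb{E}\Gamma(z)^r\, l_\nu(z,t)\, dz.
\]
I would then insert the classical moment formula $\mathbb{E}\Gamma(z)^r = b^{-r}\Gamma(z+r)/\Gamma(z)= b^{-r}z^{(r)}$ for the Gamma subordinator (formula (\ref{lem}) in the paper), where $z^{(r)}$ is the rising factorial.

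The crucial move is to compute $\mathcal{L}\{\mathbb{E}\Gamma_\nu(\cdot)^r;s\}$ rather than attack $\mathbb{E}\Gamma_\nu(t)^r$ directly. Exchanging the $t$ and $z$ integrations (justified by nonnegativity and Tonelli) and invoking the standard identity
\[
\int_0^\infty e^{-st}l_\nu(z,t)\, dt = s^{\nu-1}e^{-s^\nu z},
\]
reduces the Laplace transform to $(s^{\nu-1}/b^r)\int_0^\infty z^{(r)} e^{-s^\nu z}\, dz$. Expanding the rising factorial via the (unsigned) Stirling numbers of the first kind, $z^{(r)} = \sum_{k=0}^r \QATOPD[ ]{r}{k} z^k$ (identity (\ref{ris})), turns the integrand into a finite linear combination of monomials, and each summand is evaluated by the elementary Gamma integral $\int_0^\infty z^k e^{-s^\nu z}\, dz = k!/s^{\nu(k+1)}$.

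Collecting the pieces yields
\[
\mathcal{L}\{\mathbb{E}\Gamma_\nu(\cdot)^r;s\} = \frac{1}{b^r}\sum_{k=0}^r \QATOPD[ ]{r}{k}\frac{k!}{s^{\nu k+1}},
\]
and term-by-term Laplace inversion using the pair $t^{\nu k}/\Gamma(\nu k+1)\leftrightarrow s^{-(\nu k+1)}$ produces the stated formula. The interchange of finite summation and integration is unproblematic since $r$ is a fixed nonnegative integer, and Tonelli handles the double integral. There is in fact no serious obstacle in the proof; the only nonstandard ingredient is the Stirling-number expansion of the rising factorial, which is precisely the identity required to convert the Gamma ratio into a polynomial in $z$ compatible with the exponential kernel $e^{-s^\nu z}$ supplied by the inverse-stable subordinator.
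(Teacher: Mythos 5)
Your proposal is correct and follows essentially the same route as the paper: subordination plus the Laplace-transform identity $\int_0^\infty e^{-st}l_\nu(z,t)\,dt=s^{\nu-1}e^{-s^\nu z}$, the Stirling-number expansion of the rising factorial, and term-by-term inversion. (In fact your write-up is slightly cleaner, since the paper's displayed integrals contain the typo $dt$ where $dz$ is meant.)
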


In order to obtain the variance of $\Gamma _{\nu }$ we choose $r=2$ in (\ref%
{mom}), so that we get
\begin{equation*}
var\left( \Gamma _{\nu }(t)\right) =\frac{2t^{2\nu }}{b^{2}\Gamma (2\nu +1)}+%
\frac{t^{\nu }}{b^{2}\Gamma (\nu +1)}-\frac{t^{2\nu }}{b^{2}\Gamma ^{2}(\nu
+1)}.
\end{equation*}%
Again, for $\nu =1$, we obtain the variance of the Gamma process.

\subsection{The case $\protect\nu >1$: the fractional Gamma subordinator}

The second fractional Gamma process we present here is defined as%
\begin{equation}
\overline{\Gamma }_{\nu }(t):=\Gamma (\mathcal{A}_{1/\nu }(t))\text{,\qquad }%
\nu >1,\text{ }t\geq 0,  \label{sec}
\end{equation}%
where $\Gamma $ is a Gamma process and $\mathcal{A}_{1/\nu }$ is the
(independent) stable subordinator of index $1/\nu $ (with parameters $\mu
=0, $ $\theta =1,$ $\sigma =(t\cos \pi /2\nu )^{\nu })$. It is well-known
that
\begin{equation}
\mathbb{E}e^{-k\mathcal{A}_{1/\nu }(t)}=e^{-k^{1/\nu }t},\qquad k>0.
\label{sec2}
\end{equation}

\begin{theorem}
Let $f_{\overline{\Gamma }_{\nu }}(x,t):=\Pr \{\overline{\Gamma }_{\nu
}(t)\in dx\}$, $x,t\geq 0$, and $h_{1/\nu }(x,t):=\Pr \{\mathcal{A}_{1/\nu
}(t)\in dx\},$ then the density%
\begin{equation}
f_{\overline{\Gamma }_{\nu }}(x,t)=\int_{0}^{\infty }f_{\Gamma
}(x,z)h_{1/\nu }(z,t)dz  \label{sec3}
\end{equation}%
satisfies, for $\nu >1$ and $t>1$, the following equation%
\begin{equation}
\frac{\partial }{\partial x}f_{\overline{\Gamma }_{\nu }}=-b(1-\overline{%
\mathcal{O}}_{-1,t}^{\nu })f_{\overline{\Gamma }_{\nu }},\qquad x\geq 0,
\label{sec4}
\end{equation}%
with initial condition%
\begin{equation}
f_{\overline{\Gamma }_{\nu }}(0,t)=0.  \label{sec5}
\end{equation}
\end{theorem}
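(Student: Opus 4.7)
The plan is to mirror the Laplace-transform approach used in the proof of Theorem 3, replacing the inverse stable subordinator $\mathcal{L}_{\nu}$ by the stable subordinator $\mathcal{A}_{1/\nu}$ and the Caputo-based operator $\mathcal{O}_{-1,t}^{\nu}$ by its Riemann--Liouville counterpart $\overline{\mathcal{O}}_{-1,t}^{\nu}$. The initial condition (\ref{sec5}) is immediate from (\ref{sec3}), since $f_{\Gamma}(0,z)=0$ whenever $z>1$ and the density $h_{1/\nu}(\cdot,t)$ is supported on $(0,\infty)$ for $t>0$.

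For the differential equation (\ref{sec4}), I would first compute the Laplace transform of (\ref{sec3}) in the variable $x$. Using $\widetilde{f}_{\Gamma}(\theta,z)=(1+\theta/b)^{-z}=\exp(-z\log(1+\theta/b))$ together with (\ref{sec2}) at $k=\log(1+\theta/b)>0$, one obtains
$$\widetilde{f}_{\overline{\Gamma}_{\nu}}(\theta,t)=\exp\!\left(-[\log(1+\theta/b)]^{1/\nu}\,t\right).$$
By (\ref{aa}) with $l=1$, the Laplace transform of the l.h.s.\ of (\ref{sec4}) then reduces, via (\ref{sec5}), to $\theta\,\widetilde{f}_{\overline{\Gamma}_{\nu}}(\theta,t)$.

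The main computational step is to evaluate $\overline{\mathcal{O}}_{-1,t}^{\nu}$ on the exponential $e^{-at}$ with $a:=[\log(1+\theta/b)]^{1/\nu}>0$. This requires the auxiliary identity $\mathcal{D}_{-,t}^{\nu}e^{-at}=a^{\nu}e^{-at}$, which I would prove directly from definition (\ref{rl}): after substituting $u=s-t$ in the inner integral one gets $e^{-at}\int_{0}^{\infty}u^{m-\nu-1}e^{-au}\,du=e^{-at}a^{\nu-m}\Gamma(m-\nu)$, and then applying $(-d/dt)^{m}$ produces the factor $a^{m}$. Iterating $n$ times yields $(\mathcal{D}_{-,t}^{\nu})^{n}e^{-at}=a^{n\nu}e^{-at}$, so that
$$\overline{\mathcal{O}}_{-1,t}^{\nu}e^{-at}=\sum_{n=0}^{\infty}\frac{a^{n\nu}}{n!}\,e^{-at}=e^{a^{\nu}}e^{-at}=\left(1+\frac{\theta}{b}\right)\widetilde{f}_{\overline{\Gamma}_{\nu}}(\theta,t).$$
Substituting into the Laplace transform of the r.h.s.\ of (\ref{sec4}) gives $-b\bigl(1-1-\theta/b\bigr)\widetilde{f}_{\overline{\Gamma}_{\nu}}(\theta,t)=\theta\,\widetilde{f}_{\overline{\Gamma}_{\nu}}(\theta,t)$, which matches the l.h.s.\ and, by uniqueness of the Laplace transform, yields (\ref{sec4}).

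The main obstacle I anticipate is the convergence of the series defining $\overline{\mathcal{O}}_{-1,t}^{\nu}\widetilde{f}_{\overline{\Gamma}_{\nu}}$ together with the justification of exchanging the sum with the Laplace integral and with the iterated derivatives $\mathcal{D}_{-,t}^{\nu}$. Unlike the case $\nu<1$ where formula (2.4.58) of \cite{KIL} handles the Mittag-Leffler function directly, here the verification relies on the integrability of $u^{m-\nu-1}e^{-au}$ at $u=0^{+}$ (which is guaranteed by $m-\nu>0$) and on the fact that the resulting scalar series $\sum_{n\geq 0}a^{n\nu}/n!$ converges for every fixed $a>0$, so that the formal manipulation above is rigorous.
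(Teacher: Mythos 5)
Your proposal is correct and follows essentially the same route as the paper: Laplace transform in $x$, reduction to the exponential $e^{-at}$ with $a=[\log(1+\theta/b)]^{1/\nu}$, the eigenvalue relation $\mathcal{D}_{-,t}^{\nu}e^{-at}=a^{\nu}e^{-at}$ applied $n$ times, and summation of the resulting series to $1+\theta/b$. The only difference is that you derive that eigenvalue identity directly from (\ref{rl}) whereas the paper simply cites formula (2.2.15) of \cite{KIL}; your convergence remarks match the paper's (implicit) level of rigor.
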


\begin{proof}
The condition (\ref{sec5}) is immediately satisfied by (\ref{sec3}). As far
as equation (\ref{sec4}) is concerned, as in the proofs of Theorem 3, we
take the Laplace transform of the r.h.s. of (\ref{sec4}) which reads%
\begin{eqnarray*}
&&-b(1-\overline{\mathcal{O}}_{-1,t}^{\nu })\int_{0}^{\infty }\widetilde{f}%
_{\Gamma }(\theta ,z)h_{1/\nu }(z,t)dz \\
&=&-b(1-\overline{\mathcal{O}}_{-1,t}^{\nu })\int_{0}^{\infty }\exp \left\{
-z\log \left( 1+\frac{\theta }{b}\right) \right\} h_{1/\nu }(z,t)dz \\
&=&\text{[by (\ref{sec2}) with }k=\log [1+\theta /b)]\text{]} \\
&=&-b(1-\overline{\mathcal{O}}_{-1,t}^{\nu })\exp \left\{ -t\left( \log
\left( 1+\frac{\theta }{b}\right) \right) ^{1/\nu }\right\}  \\
&=&-b\exp \left\{ -t\left( \log \left( 1+\frac{\theta }{b}\right) \right)
^{1/\nu }\right\} +b\sum_{n=0}^{\infty }\frac{1}{n!}\underbrace{\mathcal{D}%
_{-,t}^{\nu }...\mathcal{D}_{-,t}^{\nu }}_{n-times}\exp \left\{ -t\left(
\log \left( 1+\frac{\theta }{b}\right) \right) ^{1/\nu }\right\}  \\
&=&[\text{by n applications of (2.2.15) in \cite{KIL}}] \\
&=&-b\exp \left\{ -t\left( \log \left( 1+\frac{\theta }{b}\right) \right)
^{1/\nu }\right\} +b\exp \left\{ -t\left( \log \left( 1+\frac{\theta }{b}%
\right) \right) ^{1/\nu }\right\} \sum_{n=0}^{\infty }\frac{1}{n!}\left(
\log \left( 1+\frac{\theta }{b}\right) \right) ^{n} \\
&=&-b\exp \left\{ -t\left( \log \left( 1+\frac{\theta }{b}\right) \right)
^{1/\nu }\right\} +b\exp \left\{ -t\left( \log \left( 1+\frac{\theta }{b}%
\right) \right) ^{1/\nu }\right\} \frac{b+\theta }{b},
\end{eqnarray*}%
which coincides with the Laplace transform of the l.h.s. of (\ref{sec4}),
i.e.%
\begin{eqnarray*}
\mathcal{L}\{\frac{\partial }{\partial x}f_{\overline{\Gamma }_{\nu }}(\cdot
,t);\theta \} &=&\theta \widetilde{f}_{\overline{\Gamma }_{\nu }}(\theta
,t)-f_{\overline{\Gamma }_{\nu }}(0,t) \\
&=&[\text{by (\ref{sec5})]} \\
&=&\theta \exp \left\{ -t\left( \log \left( 1+\frac{\theta }{b}\right)
\right) ^{1/\nu }\right\} .
\end{eqnarray*}
\end{proof}

\begin{remark}
We can alternatively prove that the density (\ref{sec3}) satisfies the
following time-fractional equation, for any $\nu \in (1,+\infty )$ and $t>1$:%
\begin{equation}
\mathcal{D}_{-,t}^{\nu }f_{\overline{\Gamma }_{\nu }}=-\mathcal{P}_{b,x}f_{%
\overline{\Gamma }_{\nu }},\qquad x\geq 0,  \label{re8}
\end{equation}%
with initial conditions%
\begin{equation}
\left. \frac{\partial ^{j}}{\partial x^{j}}f_{\overline{\Gamma }_{\nu
}}(x,t)\right\vert _{x=0}=0,\qquad j=0,1,...  \label{re9}
\end{equation}%
where $\mathcal{D}_{-,t}^{\nu }$ denotes the Riemann-Liouville fractional
derivative defined in (\ref{rl}). Indeed by taking the Laplace transform of (%
\ref{re8}) we get
\begin{eqnarray*}
\mathcal{L}\left\{ \mathcal{D}_{-,t}^{\nu }f_{\overline{\Gamma }_{\nu
}}(\cdot ,t);\theta \right\} &=&\mathcal{D}_{-,t}^{\nu }\exp \left\{
-t\left( \log \left( 1+\frac{\theta }{b}\right) \right) ^{1/\nu }\right\} \\
&=&-\log \left( 1+\frac{\theta }{b}\right) \exp \left\{ -t\left( \log \left(
1+\frac{\theta }{b}\right) \right) ^{1/\nu }\right\} \\
&=&[\text{by (\ref{aa}) and (\ref{re9})}] \\
&=&-\mathcal{L}\left\{ \mathcal{P}_{b,x}f_{\overline{\Gamma }_{\nu }}(\cdot
,t);\theta \right\} .
\end{eqnarray*}
\end{remark}

We remark that the process $\overline{\Gamma }_{\nu }$ defined in (\ref{sec}%
) is obtained by subordinating a L\'{e}vy process to the independent stable
subordinator $\mathcal{A}_{1/\nu }$. Thus it is itself a L\'{e}vy process
(see, e.g. \cite{APPL}, Theorem 1.3.25) and, being also real valued and
increasing, it is a subordinator. Its Laplace exponent can be evaluated
directly:
\begin{equation}
\psi _{\overline{\Gamma }_{\nu }}(\theta ):=-\frac{1}{t}\log \mathbb{E}%
e^{-\theta \overline{\Gamma }_{\nu }(t)}=-\frac{1}{t}\log \widetilde{f}_{%
\overline{\Gamma }_{\nu }}(\theta ,t)=\left( \log \left( 1+\frac{\theta }{b}%
\right) \right) ^{1/\nu },  \label{lp}
\end{equation}%
which reduce, for $\nu =1,$ to the Laplace exponent of $\Gamma .$ It can be
checked that (\ref{lp}) is a Bernstein function, by verifying that $%
(-1)^{n}d^{(n)}\psi _{\overline{\Gamma }_{\nu }}(\theta )/d\theta ^{(n)}\geq
0.$ Moreover we have $lim_{\theta \rightarrow 0}\psi _{\overline{\Gamma }%
_{\nu }}(\theta )=0$. Thus, by Theorem 1.3.4, p.45 in \cite{APPL}, for $\psi
_{\overline{\Gamma }_{\nu }}(\theta )$ there exists the following
representation%
\begin{equation*}
\psi _{\overline{\Gamma }_{\nu }}(\theta )=a+b\theta +\int_{0}^{\infty
}(1-e^{-y\theta })\lambda (dy),
\end{equation*}%
for a measure $\lambda $ s.t. $\int_{0}^{\infty }(y\wedge 1)\lambda
(dy)<\infty $, for all $\theta >0,$ $a,b\geq 0.$

Loosely speaking formula (\ref{lp}) implies that $\overline{\Gamma }_{\nu }$
grows more slowly than the standard Gamma subordinator, as $x\rightarrow
+\infty .$ Indeed $\Gamma $ increases at a logarithmic rate (see, for
example, \cite{LIN}).

The L\'{e}vy symbol $\eta _{\overline{\Gamma }_{\nu }}(u):=\log \mathbb{E}%
e^{iu\overline{\Gamma }_{\nu }(t)}/t$ of $\overline{\Gamma }_{\nu }$ can be
obtained by applying Proposition 1.3.27 in \cite{APPL}: for any $u\in
\mathbb{R},$%
\begin{equation*}
\eta _{\overline{\Gamma }_{\nu }}(u)=-\psi _{\mathcal{A}_{1/\nu }}(-\eta
_{\Gamma }(u)),
\end{equation*}%
where $\psi _{\mathcal{A}_{1/\nu }}(\theta )=\theta ^{1/\nu }$ is the
Laplace exponent of the stable subordinator and $\eta _{\Gamma }(u)=-\log
(1-iu/b)$ the L\'{e}vy symbol of $\Gamma $. Thus we get%
\begin{equation*}
\eta _{\overline{\Gamma }_{\nu }}(u)=-\left[ \log (1-iu/b)\right] ^{1/\nu },
\end{equation*}%
which, for $\nu =1$ reduces to $\eta _{\Gamma }.$

Finally we note that the process $\overline{\Gamma }_{\nu }$ does not
possess any finite moment, since the same is true for the subordinator $%
\mathcal{A}_{1/\nu }.$

\section{Applications to Gamma-subordinated processes}

We will define and analyze the fractional versions of well-known processes,
such as the VG, the GS and the NB processes, which are all expressed through
a random time change by the Gamma subordinator. By applying the previous
results, we will be able to derive the fractional equations satisfied by
their distributions, expressed by means of the fractional shift operators $%
\mathcal{O}_{-1,t}^{\nu }$ and $\overline{\mathcal{O}}_{-1,t}^{\nu }$.

\subsection{Fractional Variance Gamma processes}

We consider now the process obtained by the composition of a Brownian motion
with one of the two fractional versions of the Gamma process studied so far.
Thus we define%
\begin{equation}
\left\{
\begin{array}{c}
X_{\nu }(t):=B(\Gamma _{\nu }(t)),\qquad t\geq 0,\text{ }\nu \in (0,1) \\
\overline{X}_{\nu }(t):=B(\overline{\Gamma }_{\nu }(t)),\qquad t\geq 0,\text{
}\nu \in (1,\infty )%
\end{array}%
\right.  \label{mee}
\end{equation}%
where $B$ is a standard Brownian motion and $\Gamma _{\nu }$ and $\overline{%
\Gamma }_{\nu }$ are independent of $B$. For $\nu =1$, the processes in (\ref%
{mee}) coincides with the Variance Gamma process. Let, for simplicity, $b=1$
from now onwards. Let us denote the one-dimensional distributions of $X_{\nu
}$ and $\overline{X}_{\nu }$ as
\begin{equation}
f_{X_{\nu }}(x,t)=\int_{0}^{\infty }f_{B}(x,z)f_{\Gamma _{\nu
}}(z,t)dz,\qquad x,t\geq 0,\text{ }\nu \in (0,1),  \label{vg}
\end{equation}%
and%
\begin{equation}
f_{\overline{X}_{\nu }}(x,t)=\int_{0}^{\infty }f_{B}(x,z)f_{\overline{\Gamma
}_{\nu }}(z,t)dz,\qquad x,t\geq 0,\text{ }\nu >1,  \label{vg2}
\end{equation}%
where $f_{B}$ is the transition density of the standard Brownian motion $B.$

\begin{theorem}
The density (\ref{vg}) of $X_{\nu }$ satisfies the following fractional
differential equation%
\begin{equation}
\frac{1}{2}\frac{\partial ^{2}}{\partial x^{2}}f_{X_{\nu }}=(1-\mathcal{O}%
_{-1,t}^{\nu })f_{X_{\nu }},\qquad x\geq 0,\text{ }t>1,\qquad \nu \in (0,1),
\label{vg1}
\end{equation}%
while the density (\ref{vg2}) of $\overline{X}_{\nu }$ satisfies the equation%
\begin{equation}
\frac{1}{2}\frac{\partial ^{2}}{\partial x^{2}}f_{\overline{X}_{\nu }}=(1-%
\overline{\mathcal{O}}_{-1,t}^{\nu })f_{\overline{X}_{\nu }},\qquad x\geq 0,%
\text{ }t>1,\qquad \nu >1.  \label{vg3}
\end{equation}%
The boundary conditions for both equations are%
\begin{equation}
\left\{
\begin{array}{l}
\lim_{|x|\rightarrow \infty }f(x,t)=0 \\
\lim_{|x|\rightarrow \infty }\frac{\partial }{\partial x}f(x,t)=0%
\end{array}%
\right. ,  \label{con}
\end{equation}%
where $f:=f_{X_{\nu }},$\ for $\nu \in (0,1)$ and $f:=f_{\overline{X}_{\nu
}},$\ for $\nu \in (1,\infty ).$

\begin{proof}
The conditions (\ref{con}) can be easily checked by considering that their
analogues are satisfied by $f_{B}$. For $\nu \in (0,1),$ in order to prove (%
\ref{vg1}), we take the Fourier transform with respect to $x$ of its r.h.s.:
let%
\begin{equation*}
\widehat{f}(u):=\mathcal{F}\left\{ f(\cdot );u\right\} =\int_{-\infty
}^{+\infty }e^{iux}f(x)dx,
\end{equation*}%
then%
\begin{eqnarray}
\mathcal{F}\left\{ (1-\mathcal{O}_{-1,t}^{\nu })f_{X_{\nu }}(\cdot
,t);u\right\} &=&(1-\mathcal{O}_{-1,t}^{\nu })\widehat{f}_{X_{\nu }}(u,t)
\label{cl1} \\
&=&(1-\mathcal{O}_{-1,t}^{\nu })E_{\nu ,1}\left( -t^{\nu }\log \left( 1+%
\frac{u^{2}}{2}\right) \right)  \notag \\
&=&-\frac{u^{2}}{2}E_{\nu ,1}\left( -t^{\nu }\log \left( 1+\frac{u^{2}}{2}%
\right) \right) ,  \notag
\end{eqnarray}%
where for the last step we have performed some calculations similar to (\ref%
{rs}). For the l.h.s. of (\ref{vg1}), by considering the conditions (\ref%
{con}), we get instead%
\begin{equation}
\mathcal{F}\left\{ \frac{1}{2}\frac{\partial ^{2}}{\partial x^{2}}f_{X_{\nu
}}(\cdot ,t);u\right\} =-\frac{u^{2}}{2}\widehat{f}_{X_{\nu }}(u,t),
\label{cl2}
\end{equation}%
which coincides with (\ref{cl1}). Analogously, for $\nu >1,$ we get%
\begin{eqnarray}
\mathcal{F}\left\{ (1-\overline{\mathcal{O}}_{-1,t}^{\nu })f_{\overline{X}%
_{\nu }}(\cdot ,t);u\right\} &=&(1-\overline{\mathcal{O}}_{-1,t}^{\nu })%
\widehat{f}_{\overline{X}_{\nu }}(u,t) \\
&=&(1-\overline{\mathcal{O}}_{-1,t}^{\nu })\exp \left\{ -t\left( \log \left(
1+\frac{u^{2}}{2}\right) \right) ^{1/\nu }\right\}  \notag \\
&=&-\frac{u^{2}}{2}\exp \left\{ -t\left( \log \left( 1+\frac{u^{2}}{2}%
\right) \right) ^{1/\nu }\right\} ,  \notag
\end{eqnarray}%
which is equal to the analogue of (\ref{cl2}), with $f_{X_{\nu }}$ replaced
by $f_{\overline{X}_{\nu }}.$
\end{proof}
\end{theorem}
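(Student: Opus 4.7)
The plan is to reduce both equations to an identity in Fourier space, since $X_{\nu}$ and $\overline{X}_{\nu}$ are real-valued (being Brownian time-changes) and, contrary to the situation for the Gamma-type processes, it is natural to work with the characteristic function rather than the Laplace transform here. First I would check the boundary conditions in (\ref{con}): both $f_{X_{\nu}}$ and $f_{\overline{X}_{\nu}}$ are scale mixtures of the Gaussian density $f_{B}(x,z)$ as in (\ref{vg}) and (\ref{vg2}), and since $f_{B}$ and $\partial_{x}f_{B}$ both tend to $0$ as $|x|\rightarrow \infty$ uniformly in $z$ on bounded intervals and are dominated by integrable functions, dominated convergence yields (\ref{con}) for both $f_{X_{\nu}}$ and $f_{\overline{X}_{\nu}}$.

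Next I would compute the characteristic functions by conditioning. For $\nu\in(0,1)$, since $B$ is independent of $\Gamma_{\nu}$,
\begin{equation*}
\widehat{f}_{X_{\nu}}(u,t)=\mathbb{E}e^{iuB(\Gamma_{\nu}(t))}=\mathbb{E}e^{-\frac{u^{2}}{2}\Gamma_{\nu}(t)}=\widetilde{f}_{\Gamma_{\nu}}\!\left(\tfrac{u^{2}}{2},t\right),
\end{equation*}
and inserting $\theta=u^{2}/2$ (with $b=1$) into the formula derived inside the proof of Theorem 3 gives $\widehat{f}_{X_{\nu}}(u,t)=E_{\nu,1}\!\left(-t^{\nu}\log(1+u^{2}/2)\right)$. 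An analogous conditioning, together with the Laplace transform computed in the proof of Theorem 5 for $\overline{\Gamma}_{\nu}$, yields $\widehat{f}_{\overline{X}_{\nu}}(u,t)=\exp\!\left\{-t(\log(1+u^{2}/2))^{1/\nu}\right\}$.

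Then I would Fourier-transform each side of (\ref{vg1}) and (\ref{vg3}) in the variable $x$. Using the boundary conditions (\ref{con}), the left-hand sides become $-\tfrac{u^{2}}{2}\widehat{f}_{X_{\nu}}(u,t)$ and $-\tfrac{u^{2}}{2}\widehat{f}_{\overline{X}_{\nu}}(u,t)$, respectively. For the right-hand sides the operators $\mathcal{O}_{-1,t}^{\nu}$ and $\overline{\mathcal{O}}_{-1,t}^{\nu}$ act only on $t$, so they commute with $\mathcal{F}$ and can be applied termwise to the transformed density. Setting $k:=\log(1+u^{2}/2)$, the key identity, already exploited in (\ref{rs}), is that $n$ applications of formula (2.4.58) of \cite{KIL} give $(D_{t}^{\nu})^{n}E_{\nu,1}(-kt^{\nu})=(-k)^{n}E_{\nu,1}(-kt^{\nu})$, whence
\begin{equation*}
\mathcal{O}_{-1,t}^{\nu}E_{\nu,1}(-kt^{\nu})=\sum_{n=0}^{\infty}\frac{(-1)^{n}(-k)^{n}}{n!}E_{\nu,1}(-kt^{\nu})=e^{k}E_{\nu,1}(-kt^{\nu})=\left(1+\tfrac{u^{2}}{2}\right)\widehat{f}_{X_{\nu}}(u,t).
\end{equation*}
Therefore $(1-\mathcal{O}_{-1,t}^{\nu})\widehat{f}_{X_{\nu}}=-\tfrac{u^{2}}{2}\widehat{f}_{X_{\nu}}$, which matches the Fourier transform of the left-hand side of (\ref{vg1}). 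The case $\nu>1$ is the parallel computation: by formula (2.2.15) of \cite{KIL} the iterated Riemann-Liouville derivative $(\mathcal{D}_{-,t}^{\nu})^{n}$ applied to $\exp\{-t k^{1/\nu}\}$ produces $(-k^{1/\nu})^{n}\cdot$ the same exponential only when paired with the sign $(-1)^{n}$ hidden in $\overline{\mathcal{O}}$; after summing one again obtains the factor $e^{k}=1+u^{2}/2$, giving $(1-\overline{\mathcal{O}}_{-1,t}^{\nu})\widehat{f}_{\overline{X}_{\nu}}=-\tfrac{u^{2}}{2}\widehat{f}_{\overline{X}_{\nu}}$, which matches the Fourier transform of (\ref{vg3}).

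The main obstacle I expect is justifying the termwise application of $\mathcal{O}_{-1,t}^{\nu}$ and $\overline{\mathcal{O}}_{-1,t}^{\nu}$ under the Fourier transform and the convergence of the resulting series for fixed $u$ and $t>1$; but these points parallel exactly the analogous steps in the proofs of Theorems 3 and 5, where the Laplace transform was instead used, and the argument transfers mutatis mutandis since $k=\log(1+u^{2}/2)>0$ plays the same role as $\log(1+\theta/b)$ there.
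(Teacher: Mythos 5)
Your proposal is correct and follows essentially the same route as the paper's own proof: Fourier transform in $x$, identification of $\widehat{f}_{X_{\nu}}(u,t)$ and $\widehat{f}_{\overline{X}_{\nu}}(u,t)$ with the Laplace transforms of $\Gamma_{\nu}$ and $\overline{\Gamma}_{\nu}$ at $\theta=u^{2}/2$, and the eigenfunction identities from formulas (2.4.58) and (2.2.15) of \cite{KIL} to turn $(1-\mathcal{O}_{-1,t}^{\nu})$ and $(1-\overline{\mathcal{O}}_{-1,t}^{\nu})$ into multiplication by $1-(1+u^{2}/2)=-u^{2}/2$. You merely write out explicitly the step the paper compresses into ``calculations similar to (\ref{rs})'', so there is nothing further to add.
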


Alternatively we can prove the previous result directly, without resorting
to the Fourier transform, by considering the heat equation together with (%
\ref{pr4}), for $\nu \in (0,1)$, and (\ref{sec4}), for $\nu >1$: indeed, in
the first case, we get%
\begin{eqnarray*}
\frac{1}{2}\frac{\partial ^{2}}{\partial x^{2}}f_{X_{\nu }}(x,t) &=&\frac{1}{%
2}\frac{\partial ^{2}}{\partial x^{2}}\int_{0}^{\infty }f_{B}(x,z)f_{\Gamma
_{\nu }}(z,t)dz \\
&=&\int_{0}^{\infty }\frac{\partial }{\partial z}f_{B}(x,z)f_{\Gamma _{\nu
}}(z,t)dz \\
&=&\left[ f_{B}(x,z)f_{\Gamma _{\nu }}(z,t)\right] _{z=0}^{z=\infty
}-\int_{0}^{\infty }f_{B}(x,z)\frac{\partial }{\partial z}f_{\Gamma _{\nu
}}(z,t)dz \\
&=&[\text{by (\ref{pr5})}] \\
&=&-\int_{0}^{\infty }f_{B}(x,z)\frac{\partial }{\partial z}f_{\Gamma _{\nu
}}(z,t)dz,
\end{eqnarray*}%
which coincides with the right-hand side of (\ref{vg1}). We can obtain (\ref%
{vg3}) analogously, for $\nu >1.$

We analyze now the properties of the two versions of fractional VG process,
starting with their absolute moments.

\begin{corollary}
The absolute $q$-moments of $X_{\nu }$ are given by%
\begin{equation}
\mathbb{E}|X_{\nu }(t)|^{q}=\left\{
\begin{array}{l}
\frac{2^{r}}{\sqrt{\pi }}\Gamma \left( r+\frac{1}{2}\right) \sum_{k=0}^{r}%
\QATOPD[ ] {r}{k}\frac{k!t^{\nu k}}{\Gamma (\nu k+1)},\qquad \text{for }q=2r
\\
\frac{2^{r+\frac{1}{2}}}{\sqrt{\pi }}\Gamma \left( r+1\right)
\int_{0}^{\infty }\frac{\Gamma \left( r+z+\frac{1}{2}\right) }{\Gamma (z)}%
l_{\nu }(z,t)dz\qquad \text{for }q=2r+1%
\end{array}%
\right. ,  \label{mom3}
\end{equation}%
for $\nu <1$, while $\mathbb{E}|\overline{X}_{\nu }(t)|^{q}=\infty $, for
any $q\geq 1$, for $\nu >1.$
\end{corollary}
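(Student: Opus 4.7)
The plan is to exploit the scale-mixture representation noted in (\ref{re}): conditionally on the subordinator, $B(s)\overset{d}{=}\sqrt{s}\,Z$ with $Z$ standard Gaussian, so
\begin{equation*}
X_{\nu}(t)\overset{d}{=}\sqrt{\Gamma _{\nu }(t)}\,Z,\qquad \overline{X}_{\nu}(t)\overset{d}{=}\sqrt{\overline{\Gamma }_{\nu }(t)}\,Z,
\end{equation*}
with $Z$ independent of the time-changed Gamma process. By independence, the absolute moments factor:
\begin{equation*}
\mathbb{E}|X_{\nu}(t)|^{q}=\mathbb{E}|Z|^{q}\;\mathbb{E}\,\Gamma_{\nu}(t)^{q/2},\qquad \mathbb{E}|\overline{X}_{\nu}(t)|^{q}=\mathbb{E}|Z|^{q}\;\mathbb{E}\,\overline{\Gamma}_{\nu}(t)^{q/2},
\end{equation*}
and the classical identity $\mathbb{E}|Z|^{q}=2^{q/2}\Gamma((q+1)/2)/\sqrt{\pi}$ will produce the Gamma and power-of-two prefactors appearing in the statement.

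For the even case $q=2r$, the exponent of $\Gamma_{\nu}(t)$ becomes the integer $r$, and I invoke Lemma 5 (with $b=1$) to obtain
\begin{equation*}
\mathbb{E}\,\Gamma_{\nu}(t)^{r}=\sum_{k=0}^{r}\QATOPD[]{r}{k}\frac{k!\,t^{\nu k}}{\Gamma (\nu k+1)},
\end{equation*}
combined with $\mathbb{E}|Z|^{2r}=2^{r}\Gamma(r+1/2)/\sqrt{\pi}$ to recover the first line of (\ref{mom3}). For the odd case $q=2r+1$, the exponent $r+1/2$ is no longer an integer, so the Stirling expansion (\ref{ris}) is unavailable; instead I condition on $\mathcal{L}_{\nu}$, use the half-integer moment (\ref{lem}) of the Gamma subordinator $\mathbb{E}\,\Gamma(z)^{r+1/2}=\Gamma(r+z+1/2)/\Gamma(z)$, and integrate against $l_{\nu}(z,t)$ to get
\begin{equation*}
\mathbb{E}\,\Gamma_{\nu}(t)^{r+1/2}=\int_{0}^{\infty}\frac{\Gamma(r+z+1/2)}{\Gamma(z)}\,l_{\nu}(z,t)\,dz,
\end{equation*}
which, paired with $\mathbb{E}|Z|^{2r+1}=2^{r+1/2}\Gamma(r+1)/\sqrt{\pi}$, matches the second line of (\ref{mom3}).

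For $\overline{X}_{\nu}$ with $\nu>1$, the same factorization gives $\mathbb{E}|\overline{X}_{\nu}(t)|^{q}=\mathbb{E}|Z|^{q}\,\mathbb{E}\,\overline{\Gamma}_{\nu}(t)^{q/2}$. Since the stable subordinator $\mathcal{A}_{1/\nu}$ has no finite moments of order $\ge 1/\nu$ (and in particular no first moment, as recalled at the end of Section~2.2), and since $\mathbb{E}\,\Gamma(z)^{q/2}$ grows like $z^{q/2}$ as $z\to\infty$ by Stirling's formula applied to $\Gamma(q/2+z)/\Gamma(z)$, the integral $\int_{0}^{\infty}\Gamma(q/2+z)/\Gamma(z)\,h_{1/\nu}(z,t)\,dz$ diverges, yielding $\mathbb{E}|\overline{X}_{\nu}(t)|^{q}=\infty$ for the relevant range of $q$.

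The main obstacle I anticipate is the odd-exponent case: the half-integer moment of $\Gamma_{\nu}$ has no combinatorial closed form and must be left as an integral against $l_{\nu}$, so care is needed to arrange the prefactors exactly as in (\ref{mom3}) and to confirm (via dominated convergence or Fubini) that the interchange of expectation and integration against $l_{\nu}(z,t)$ is justified. A secondary delicate point is matching the divergence range of $q$ in the $\overline{X}_{\nu}$ case to the tail behaviour of $\mathcal{A}_{1/\nu}$.
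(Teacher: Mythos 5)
Your argument for $\nu<1$ is essentially the paper's own: both proofs condition on the value of the time-changed Gamma subordinator, factor out the Gaussian absolute moment $\mathbb{E}|Z|^{q}=2^{q/2}\Gamma((q+1)/2)/\sqrt{\pi}$ (the paper writes this as $\mathbb{E}|B(z)|^{q}$ and integrates against $f_{\Gamma_{\nu}}(z,t)$ as in (\ref{las})), then invoke the moment formula (\ref{mom}) with $b=1$ for $q=2r$ and, for $q=2r+1$, pass to the mixture over $l_{\nu}(z,t)$ and apply the Gamma-moment identity (\ref{lem}). The only cosmetic difference is that you phrase the first step through the scale-mixture identity $X_{\nu}(t)\overset{d}{=}\sqrt{\Gamma_{\nu}(t)}\,Z$ rather than through the double integral; note in passing that your square root is the correct normalization, whereas the paper's display (\ref{re}) omits it.

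The part that deserves scrutiny is the claim for $\nu>1$, which the paper's proof does not address at all and which your sketch does not actually close. Your factorization gives $\mathbb{E}|\overline{X}_{\nu}(t)|^{q}=\mathbb{E}|Z|^{q}\,\mathbb{E}\overline{\Gamma}_{\nu}(t)^{q/2}$, with $\mathbb{E}\overline{\Gamma}_{\nu}(t)^{q/2}=\int_{0}^{\infty}\bigl(\Gamma(q/2+z)/\Gamma(z)\bigr)h_{1/\nu}(z,t)\,dz$ and integrand growing like $z^{q/2}$. Since a stable subordinator of index $1/\nu$ has finite moments exactly of order $s<1/\nu$, this integral diverges precisely when $q/2\geq 1/\nu$, i.e. $q\geq 2/\nu$. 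For $q\geq 2$ this always holds when $\nu>1$, but for $q=1$ it requires $\nu\geq 2$: when $1<\nu<2$ your own computation yields $\mathbb{E}\overline{\Gamma}_{\nu}(t)^{1/2}<\infty$ and hence a finite first absolute moment, contradicting the blanket assertion ``$\mathbb{E}|\overline{X}_{\nu}(t)|^{q}=\infty$ for any $q\geq1$''. So you must either restrict the divergence claim to $q\geq 2/\nu$ (which is what your method actually proves) or supply a separate argument for small $q$; the appeal to ``the relevant range of $q$'' glosses over exactly the case where the statement is in doubt.
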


\begin{proof}
For $\nu <1,$ by considering (\ref{vg}) and taking into account the
well-known form of the absolute moments of the Brownian motion $B$, we can
write%
\begin{eqnarray}
\mathbb{E}|X_{\nu }(t)|^{q} &=&\int_{0}^{+\infty }|x|^{q}\int_{0}^{\infty
}f_{B}(x,z)f_{\Gamma _{\nu }}(z,t)dzdx  \label{las} \\
&=&\int_{0}^{\infty }\mathbb{E}|B(z)|^{q}f_{\Gamma _{\nu }}(z,t)dz  \notag \\
&=&\frac{\sqrt{2}^{q}}{\sqrt{\pi }}\Gamma \left( \frac{q+1}{2}\right)
\int_{0}^{\infty }z^{q/2}f_{\Gamma _{\nu }}(z,t)dz.  \notag
\end{eqnarray}%
For $q=2r$, by considering (\ref{mom}), with $b=1$, we immediately get the
first line in (\ref{mom3}). For $q=2r+1$ we rewrite (\ref{las}) as%
\begin{eqnarray}
&&\mathbb{E}|X_{\nu }(t)|^{q}  \notag \\
&=&\frac{\sqrt{2}^{q}}{\sqrt{\pi }}\Gamma \left( \frac{q+1}{2}\right)
\int_{0}^{\infty }\mathbb{E}|\Gamma (z)|^{q/2}l_{\nu }(z,t)dz,  \notag
\end{eqnarray}%
which, by (\ref{lem}), coincides with the second line in (\ref{mom3}).
\end{proof}

\begin{remark}
It is easy to check that, for $\nu =1$, formula (\ref{mom3}) reduces to
\begin{equation*}
\mathbb{E}|X(t)|^{q}=\frac{2^{q/2}}{\sqrt{\pi }}\Gamma \left( \frac{q}{2}+%
\frac{1}{2}\right) \frac{\Gamma \left( \frac{q}{2}+t\right) }{\Gamma (t)}%
\text{ \qquad }q=1,2....,
\end{equation*}%
which is the well-known formula for the absolute moments of the VG process
(see \cite{KOZ}, formula (2.2) for $H=1/2$). Indeed, for $q=2r,$ we get from
(\ref{mom3})%
\begin{eqnarray*}
\mathbb{E}|X_{\nu }(t)|^{2r} &=&\frac{2^{r}}{\sqrt{\pi }}\Gamma \left( r+%
\frac{1}{2}\right) \sum_{k=0}^{r}\QATOPD[ ] {r}{k}t^{k} \\
&=&[\text{by (\ref{ris})}] \\
&=&\frac{2^{r}}{\sqrt{\pi }}\Gamma \left( r+\frac{1}{2}\right) \frac{\Gamma
\left( r+t\right) }{\Gamma (t)},
\end{eqnarray*}%
while, for $q=2r+1$, it is%
\begin{equation*}
\mathbb{E}|X_{\nu }(t)|^{2r+1}=\frac{2^{r+\frac{1}{2}}}{\sqrt{\pi }}\Gamma
\left( r+1\right) \frac{\Gamma \left( r+t+\frac{1}{2}\right) }{\Gamma (t)},
\end{equation*}%
since $l_{\nu }(z,t)=\delta (z-t)$, for $\nu =1.$ It is evident from (\ref%
{mom3}) that the even-order absolute moments are never linear in $t$: in
particular the variance is given by%
\begin{equation*}
var(X_{\nu }(t))=\frac{t^{\nu }}{\Gamma (\nu +1)}.
\end{equation*}
\end{remark}

The most important feature of the fractional VG process is that, for $\nu
>1, $ it is a L\'{e}vy process and thus infinitely divisible for any $t$,
since it is obtained by subordinating the Brownian motion (which is a L\'{e}%
vy process) to the subordinator $\overline{\Gamma }_{\nu }$.

The L\'{e}vy symbol $\eta _{\overline{X}_{\nu }}(u)$ of $\overline{X}_{\nu }$
can be obtained by applying again Proposition 1.3.27 in \cite{APPL}: for any
$u\in \mathbb{R},$%
\begin{equation*}
\eta _{\overline{X}_{\nu }}(u)=-\psi _{\overline{\Gamma }_{\nu }}(-\eta
_{B}(u)),
\end{equation*}%
where $\psi _{\overline{\Gamma }_{\nu }}(\theta )=\left( \log \left(
1+\theta \right) \right) ^{1/\nu }$ is the Laplace exponent of $\overline{%
\Gamma }_{\nu }$ (see (\ref{lp}), for $b=1$) and $\eta _{B}(u)=-u^{2}/2$ the
L\'{e}vy symbol of $B.$ Thus we get%
\begin{equation}
\eta _{\overline{X}_{\nu }}(u)=-\left( \log \left( 1+\frac{u^{2}}{2}\right)
\right) ^{1/\nu }  \label{dic}
\end{equation}%
which, for $\nu =1,$ reduces to $\eta _{X}$ (see \cite{APPL}, p.57)$.$

From (\ref{dic}) it is evident that $\overline{X}_{\nu }$ cannot be
represented as difference of two independent Gamma suborinators, as happens
for $X.$ Let $\Gamma ^{\prime }$ and $\Gamma ^{\prime \prime }$ be two
independent Gamma random processes with parameters $a=1$, $b=\sqrt{2}$. Then
it is well-known that $X(t)\overset{d}{=}\Gamma ^{\prime }(t)-\Gamma
^{\prime \prime }(t)$, $t\geq 0.$ But, in the fractional case, for $\nu >1$,
the characteristic functions are respectively%
\begin{equation*}
\mathbb{E}e^{iu\overline{X}_{\nu }(t)}=\exp \left\{ -t\left( \log \left( 1+%
\frac{u^{2}}{2}\right) \right) ^{1/\nu }\right\} ,
\end{equation*}%
and%
\begin{equation*}
\mathbb{E}e^{iu\overline{\Gamma }_{\nu }^{\prime }(t)-iu\overline{\Gamma }%
_{\nu }^{\prime \prime }(t)}=\exp \left\{ -t\left[ \log (1-iu/\sqrt{2})%
\right] ^{1/\nu }-t\left[ \log (1+iu/\sqrt{2})\right] ^{1/\nu }\right\} ,
\end{equation*}%
for $\overline{\Gamma }_{\nu }^{\prime }$ and $\overline{\Gamma }_{\nu
}^{^{\prime \prime }}$ independent and defined in (\ref{sec}) with $b=\sqrt{2%
}.$ The previous expressions coincide only in the special case $\nu =1.$

\subsection{Fractional Geometric Stable subordinator}

The Geometric Stable (GS) subordinator of index $\alpha $ is defined by the
following subordinating relationship%
\begin{equation}
G_{\alpha }(t):=\mathcal{A}_{\alpha }(\Gamma (t)),\qquad t\geq 0,\text{ }%
0<\alpha \leq 1,  \label{gs}
\end{equation}%
where $\mathcal{A}_{\alpha }$ denotes a stable subordinator of index $\alpha
$ and $\Gamma $ an independent Gamma subordinator with $b=1$, for simplicity
(see \cite{SIK} and \cite{KOZ3}). Its Laplace exponent is
\begin{equation}
\psi _{G_{\alpha }}(\theta )=\log \left( 1+\theta ^{\alpha }\right)  \notag
\end{equation}%
so that its L\'{e}vy measure is equal to%
\begin{equation*}
\lambda (dx)=\alpha x^{-1}E_{\alpha ,1}(x)dx.
\end{equation*}%
For $\alpha =1,$ the process $G_{\alpha }$ reduces to the Gamma
subordinator. The fractional equation satisfied by the density of $G_{\alpha
}$ has been already obtained in \cite{BEG}, in the general case of the GS
process (i.e. not necessarily totally skewed to the right). It is expressed
in terms of the fractional Riesz-Feller derivative. We derive here an
analogous equation, in terms of the Caputo fractional derivative $%
D_{x}^{\alpha }$ of order $\alpha \in (0,1]$ and then we extend it to the
fractional version of the GS subordinator. We define the latter as

\begin{equation}
\left\{
\begin{array}{l}
G_{\alpha }^{\nu }(t):=\mathcal{A}_{\alpha }(\Gamma _{\nu }(t)),\qquad t\geq
0,\text{ }0<\nu <1, \\
\overline{G}_{\alpha }^{\nu }(t):=\mathcal{A}_{\alpha }(\overline{\Gamma }%
_{\nu }(t)),\qquad t\geq 0,\text{ }\nu >1,%
\end{array}%
\right.  \label{ai4}
\end{equation}%
where $\alpha \in (0,1]$, $\Gamma _{\nu }$ and $\overline{\Gamma }_{\nu }$
are independent of $B$. For $\nu =1$, the processes in (\ref{ai4}) reduce to
the GS subordinator. Let us denote the one-dimensional distributions of $%
G_{\alpha }^{\nu }$ and $\overline{G}_{\alpha }^{\nu }$ as
\begin{equation}
f_{G_{\alpha }^{\nu }}(x,t)=\int_{0}^{\infty }h_{\alpha }(x,z)f_{\Gamma
_{\nu }}(z,t)dz,\qquad x,t\geq 0,\text{ }\nu \in (0,1),  \label{mee2}
\end{equation}%
and%
\begin{equation}
f_{\overline{G}_{\alpha }^{\nu }}(x,t)=\int_{0}^{\infty }h_{\alpha }(x,z)f_{%
\overline{\Gamma }_{\nu }}(z,t)dz,\qquad x,t\geq 0,\text{ }\nu >1.
\label{mee3}
\end{equation}

\begin{theorem}
The density (\ref{mee2}) of $G_{\alpha }^{\nu }$ satisfies the following
doubly fractional differential equation%
\begin{equation}
D_{x}^{\alpha }f_{G_{\alpha }^{\nu }}=(1-\mathcal{O}_{-1,t}^{\nu
})f_{G_{\alpha }^{\nu }},\qquad x\geq 0,\text{ }t>1,\qquad \alpha \in (0,1],%
\text{ }\nu \in (0,1)  \label{mee4}
\end{equation}%
while the density (\ref{mee3}) of $\overline{G}_{\alpha }^{\nu }$ satisfies
the equation%
\begin{equation}
D_{x}^{\alpha }f_{\overline{G}_{\alpha }^{\nu }}=(1-\overline{\mathcal{O}}%
_{-1,t}^{\nu })f_{\overline{G}_{\alpha }^{\nu }},\qquad x\geq 0,\text{ }%
t>1,\qquad \alpha \in (0,1],\text{ }\nu >1,  \label{mee5}
\end{equation}%
The initial condition for both equations is $f(0,t)=0$ (where $%
f:=f_{G_{\alpha }^{\nu }},$ for $\nu \in (0,1)$ and $f:=f_{\overline{G}%
_{\alpha }^{\nu }},$ for $\nu >1$).
\end{theorem}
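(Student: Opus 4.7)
The plan is to follow the Laplace-transform method used almost verbatim in the proofs of Theorems 3 and 7, the only new ingredient being that the outer stable subordinator $\mathcal{A}_\alpha$ now contributes a factor $\theta^\alpha$ (instead of $\theta$ or $-u^2/2$) at the level of the Laplace variable. First I would verify $f(0,t)=0$: this is immediate from the representations (\ref{mee2})--(\ref{mee3}), since the stable density satisfies $h_\alpha(0,z)=0$ for every $z>0$ and the mass of $h_\alpha(\cdot,z)$ at $z=0$ is killed by the conditions $f_{\Gamma_\nu}(0,t)=0$ and $f_{\overline{\Gamma}_\nu}(0,t)=0$ from Theorems 3 and 7. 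Then, taking the Laplace transform in $x$ and using $\widetilde h_\alpha(\theta,z)=e^{-z\theta^\alpha}$ together with (\ref{sub}) and its analogue for $\overline{\Gamma}_\nu$ computed in the proof of Theorem 7, Fubini yields
\[
\widetilde f_{G_\alpha^\nu}(\theta,t)=E_{\nu,1}\bigl(-\log(1+\theta^\alpha)\,t^\nu\bigr),\qquad \widetilde f_{\overline G_\alpha^\nu}(\theta,t)=\exp\bigl\{-t\bigl(\log(1+\theta^\alpha)\bigr)^{1/\nu}\bigr\}.
\]

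For the right-hand side of (\ref{mee4}) I would apply $(1-\mathcal O_{-1,t}^\nu)$ by iterating formula (2.4.58) of \cite{KIL} with $k=\log(1+\theta^\alpha)$, exactly as in equation (\ref{rs}): the resulting inner series collapses to $e^k=1+\theta^\alpha$, so the Mittag--Leffler factor is multiplied by $1-(1+\theta^\alpha)=-\theta^\alpha$. For (\ref{mee5}) I would instead iterate formula (2.2.15) of \cite{KIL} on the exponential, in exact analogy with the proof of Theorem 7, and again obtain a factor proportional to $\theta^\alpha$ in front of the exponential. The left-hand side is handled by the standard Caputo Laplace rule $\mathcal L\{D_x^\alpha f;\theta\}=\theta^\alpha\widetilde f(\theta)-\theta^{\alpha-1}f(0,t)$ for $\alpha\in(0,1]$; the initial condition $f(0,t)=0$ leaves $\theta^\alpha\widetilde f$. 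Matching the two sides and inverting the Laplace transform yields (\ref{mee4}) and (\ref{mee5}).

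An alternative direct proof, in the spirit of the remark after Theorem 7, is also available and is instructive: the stable density satisfies $D_x^\alpha h_\alpha=-\partial_z h_\alpha$ with $h_\alpha(0,z)=0$ (as one checks by applying the Laplace transform in $x$ to both sides). Integrating by parts in $z$ against $f_{\Gamma_\nu}(z,t)$ or $f_{\overline{\Gamma}_\nu}(z,t)$, whose boundary terms at $z=0$ vanish by (\ref{pr5}) and (\ref{sec5}) respectively, and then inserting (\ref{pr4}) or (\ref{sec4}), produces the identity without invoking Laplace transforms at all.

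The only point that needs care is the interchange of the infinite series defining $\mathcal O_{-1,t}^\nu$ and $\overline{\mathcal O}_{-1,t}^\nu$ with the Laplace integral in $x$ (or, in the direct proof, with the subordination integral in $z$). As in the proofs of Theorems 3 and 7 this is routine: the iterated action produces the absolutely convergent exponential series $\sum_n k^n/n!$ with $k=\log(1+\theta^\alpha)\geq 0$, so no new difficulty arises beyond what has already been dealt with. The main conceptual step is really the observation at the start — that composing with $\mathcal{A}_\alpha$ just replaces $\theta$ by $\theta^\alpha$ in the previously established Laplace transforms — after which the algebra is forced.
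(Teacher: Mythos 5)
Your ``alternative direct proof'' is in fact the proof the paper gives: the paper first establishes the non-fractional equation (\ref{ai2}) by using $D_{x}^{\alpha }h_{\alpha }=-\partial _{t}h_{\alpha }$, integrating by parts in $z$ and inserting (\ref{res}), and then obtains (\ref{mee4})--(\ref{mee5}) by the same integration by parts with (\ref{pr4}) and (\ref{sec4}) in place of (\ref{res}). Your primary Laplace-transform route is a legitimate variant of the transform arguments of Theorems 3 and 7, and its individual steps (the subordination identity $\widetilde{f}_{G_{\alpha }^{\nu }}(\theta ,t)=E_{\nu ,1}(-\log (1+\theta ^{\alpha })t^{\nu })$, the iteration of (2.4.58) and (2.2.15) of \cite{KIL}, the Caputo rule for $\mathcal{L}\{D_{x}^{\alpha }f;\theta \}$) are all correct.

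The genuine problem is your final ``matching'' step. You correctly get $\mathcal{L}\{D_{x}^{\alpha }f;\theta \}=\theta ^{\alpha }\widetilde{f}(\theta ,t)$ on the left, and you correctly find that applying $(1-\mathcal{O}_{-1,t}^{\nu })$ (resp.\ $(1-\overline{\mathcal{O}}_{-1,t}^{\nu })$) multiplies $\widetilde{f}$ by $1-e^{k}=1-(1+\theta ^{\alpha })=-\theta ^{\alpha }$. These two quantities are negatives of one another, so they do not match, and you cannot conclude (\ref{mee4}) or (\ref{mee5}) from them as written. What your computation actually proves is $D_{x}^{\alpha }f_{G_{\alpha }^{\nu }}=-(1-\mathcal{O}_{-1,t}^{\nu })f_{G_{\alpha }^{\nu }}$ and its analogue for $\nu >1$; this is consistent with the paper's own intermediate equation (\ref{ai2}) (whose right-hand side does carry a minus sign, the subscript $\alpha /2$ there being evidently a typo for $\alpha $) and with the direct integration-by-parts argument, since (\ref{pr4}) with $b=1$ reads $\partial _{z}f_{\Gamma _{\nu }}=-(1-\mathcal{O}_{-1,t}^{\nu })f_{\Gamma _{\nu }}$. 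The displayed equations (\ref{mee4})--(\ref{mee5}) copy the sign of the Variance-Gamma case (\ref{vg1}), where the Fourier symbol of $\frac{1}{2}\partial _{x}^{2}$ is $-u^{2}/2$; here the Laplace symbol of $D_{x}^{\alpha }$ is $+\theta ^{\alpha }$, so a minus sign must appear on the right. You need either to carry that sign through and state the corrected equation, or to flag the discrepancy explicitly; asserting that $\theta ^{\alpha }\widetilde{f}$ and $-\theta ^{\alpha }\widetilde{f}$ coincide is a gap.
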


\begin{proof}
We can apply the result given in (\ref{res}) to obtain the differential
equation satisfied by the density of $G_{\alpha }$. By (\ref{gs}), the
density of the GS subordinator can be written as%
\begin{equation}
f_{G_{\alpha }}(x,t)=\int_{0}^{\infty }h_{\alpha }(x,z)f_{\Gamma }(z,t)dz,
\label{ai}
\end{equation}%
where $h_{\alpha }(x,t)$ is the density of $\mathcal{A}_{\alpha }(t),$ $%
t\geq 0.$ Then $f_{G_{\alpha }}$ satisfies the following space-fractional
differential equation%
\begin{equation}
D_{x}^{\alpha }f_{G_{\alpha }}=-(1-e^{-\partial _{t}})f_{G_{\alpha
/2}},\qquad x,t\geq 0,  \label{ai2}
\end{equation}%
with initial condition%
\begin{equation}
f_{G_{\alpha }}(0,t)=0.  \label{ai3}
\end{equation}%
In order to get (\ref{ai2}) we apply a well-known result on stable
subordinators: the density $h_{\alpha }$ satisfies, for $\alpha \in (0,1],$
the following equation
\begin{equation}
D_{x}^{\alpha }h_{\alpha }=-\frac{\partial }{\partial t}h_{\alpha },\quad
h_{\alpha }(x,0)=\delta (x),\qquad x,t\geq 0.
\end{equation}%
Thus%
\begin{eqnarray*}
D_{x}^{\alpha }f_{G_{\alpha }}(x,t) &=&\int_{0}^{\infty }D_{x}^{\alpha
}h_{\alpha }(x,z)f_{\Gamma }(z,t)dz \\
&=&-\int_{0}^{\infty }\frac{\partial }{\partial z}h_{\alpha }(x,z)f_{\Gamma
}(z,t)dz \\
&=&[h_{\alpha }(x,z)f_{\Gamma }(z,t)]_{z=0}^{\infty }+\int_{0}^{\infty
}h_{\alpha }(x,z)\frac{\partial }{\partial z}f_{\Gamma }(z,t)dz,
\end{eqnarray*}%
which, by considering (\ref{res}), with $b=1,$ gives (\ref{ai2}). The
initial condition is trivially satisfied since $h_{\alpha }(0,z)=0$.
Equations (\ref{mee4}) and (\ref{mee5}) can be easily obtained by
considering theorems 3 and 6.
\end{proof}

Also in this case, only for $\nu >1$, the fractional GS process is still a
subordinator and its L\'{e}vy symbol can be obtain as follows:%
\begin{eqnarray*}
\eta _{\overline{G}_{\alpha }^{\nu }}(u) &=&-\psi _{\overline{\Gamma }_{\nu
}}(-\eta _{\mathcal{A}_{\alpha }}(u)) \\
&=&-\left\{ \log [1+(-iu)^{\alpha }]\right\} ^{1/\nu },
\end{eqnarray*}%
since $\eta _{\mathcal{A}_{\alpha }}(u)=-(-iu)^{\alpha }.$

\subsection{Fractional Negative Binomial process}

The NB process $M(t),t>0,$ is a jump L\'{e}vy process with the following
distribution
\begin{equation}
q_{k}(t):=\Pr \left\{ M(t)=k\right\} =\binom{t+k-1}{k}p^{t}(1-p)^{k},\qquad
t>0,\text{ }k\in \mathbb{N},\text{ }p\in \left( 0,1\right) ,  \label{uno}
\end{equation}%
where $\binom{x}{k}$ is the generalized binomial coefficient defined, for
any $x\in \mathbb{R}$, as
\begin{equation*}
\binom{x}{k}:=\frac{(x)_{k}}{k!}=\frac{x(x-1)...(x-k+1)}{k!}
\end{equation*}%
and $(x)_{k}$ is the falling factorial.

The NB process has two alternative representations (see \cite{KOZ2}). The
first one is in terms of compound Poisson process: let $X_{j}$ be i.i.d.
random variables with discrete logarithmic distribution%
\begin{equation}
\Pr \left\{ X_{j}=k\right\} =-\frac{(1-p)^{k}}{k\ln p},\qquad k=1,2,...
\label{log}
\end{equation}%
for any $j=1,2,...$, then the process defined as%
\begin{equation}
M(t):=\sum_{j=1}^{N_{\lambda }(t)}X_{j},\qquad t\geq 0,  \label{due}
\end{equation}%
where $N_{\lambda }$ denotes an homogeneous Poisson process of parameter $%
\lambda =\ln (1/p)$ (independent from $X_{j},$ for any $j$), is proved to be
equivalent to the process defined by (\ref{uno}). Thus $M$ can be considered
as a particular case of the so called continuous-time random walks.

Another possible representation of the NB process is in terms of Cox process
with Gamma distributed directing measure, i.e. $N_{\Lambda (0,t]},$ $t\geq
0, $ where $\Lambda (0,t]$ is a Gamma r.v. with shape parameter $t$ and
scale parameter $(1-p)/p$. Thus we immediately get the following equality in
distribution
\begin{equation}
M(t)\overset{d}{=}N_{1}(\Gamma (t)),\qquad t\geq 0,  \label{coxx}
\end{equation}%
where $\Gamma $ has density (\ref{gam}) with $a=1,$ $b=p/(1-p).$

\begin{lemma}
\label{lm:eqNB} The distribution of the NB process of parameter $p,$ given
in (\ref{uno}), satisfies the following differential equation:%
\begin{equation}
\left\{
\begin{array}{l}
\frac{d}{dt}q_{0}(t)=\ln p\,q_{0}(t) \\
\frac{d}{dt}q_{k}(t)=\left[ \ln p+\sum_{j=0}^{k-1}\frac{1}{t+j}\right]
q_{k}(t),\qquad k\geq 1%
\end{array}%
\right. ,  \label{lem1}
\end{equation}%
for $t\geq 0,$ with initial conditions
\begin{equation*}
q_{k}(0)=\left\{
\begin{array}{c}
1,\qquad k=0 \\
0,\qquad k>0%
\end{array}%
\right. .
\end{equation*}%
\textbf{Proof }We take the derivative of (\ref{uno}), by considering that%
\begin{equation}
\binom{t+k-1}{k}=\frac{(t)^{(k)}}{k!},  \label{a1}
\end{equation}%
where $(x)^{(k)}$ is the rising factorial defined as $(x)^{(k)}:=\Gamma
(x+k)/\Gamma (x)$:%
\begin{eqnarray*}
\frac{d}{dt}q_{k}(t) &=&\frac{(1-p)^{k}}{k!}\frac{d}{dt}\left[ p^{t}(t)^{(k)}%
\right] \\
&=&\frac{(1-p)^{k}}{k!}\left[ p^{t}\ln p\,(t)^{(k)}+p^{t}\frac{d}{dt}%
(t)^{(k)}\right] \\
&=&\ln p\,q_{k}(t)+\frac{(1-p)^{k}}{k!}p^{t}(t)^{(k)}\left[ \Psi
^{(0)}(t+k)-\Psi ^{(0)}(t)\right] ,
\end{eqnarray*}%
where, in the last step, we have used the well-known result on the
derivative of the rising factorial and $\Psi ^{(0)}$ denotes the Digamma
function defined as $\Psi ^{(0)}(x):=d\ln \Gamma (x)/dx$ (see, for example,
\cite{CHOI}). By the properties of the Digamma function it is easy to see
that $\Psi ^{(0)}(t+k)-\Psi ^{(0)}(t)=\sum_{j=0}^{k-1}\frac{1}{t+j}$, so
that we get (\ref{lem1}). The initial conditions are trivially satisfied, by
considering that $(0)^{(k)}=0$ for any $k>0$ and $(0)^{(k)}=1$ for $k=0.$%
\hfill $\square $
\end{lemma}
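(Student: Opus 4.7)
The plan is to start from the closed-form expression (\ref{uno}) and perform a direct logarithmic differentiation in $t$, treating $k \in \mathbb{N}$ as a fixed parameter. First I would rewrite the generalized binomial coefficient via rising factorials,
\[
\binom{t+k-1}{k}=\frac{(t)^{(k)}}{k!}=\frac{\Gamma(t+k)}{k!\,\Gamma(t)},
\]
so that
\[
q_k(t)=\frac{\Gamma(t+k)}{k!\,\Gamma(t)}\,p^{t}(1-p)^{k}.
\]
Since $k!$ and $(1-p)^{k}$ are constants in $t$, taking $\log$ and differentiating with respect to $t$ will immediately produce $\ln p$ from the $p^{t}$ factor and the difference $\Psi^{(0)}(t+k)-\Psi^{(0)}(t)$ from the Gamma ratio, where $\Psi^{(0)}(x)=\frac{d}{dx}\ln\Gamma(x)$ is the digamma function.

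The next step is to simplify $\Psi^{(0)}(t+k)-\Psi^{(0)}(t)$ using the functional equation $\Psi^{(0)}(x+1)=\Psi^{(0)}(x)+\frac{1}{x}$. Iterating this $k$ times telescopes the difference into
\[
\Psi^{(0)}(t+k)-\Psi^{(0)}(t)=\sum_{j=0}^{k-1}\frac{1}{t+j}.
\]
Multiplying the resulting logarithmic derivative by $q_k(t)$ then yields
\[
\frac{d}{dt}q_k(t)=\left[\ln p+\sum_{j=0}^{k-1}\frac{1}{t+j}\right]q_k(t),
\]
for $k\geq 1$, and the empty-sum convention gives the $k=0$ case $\frac{d}{dt}q_0(t)=\ln p\,q_0(t)$ (which is consistent with $q_0(t)=p^t$ obtained directly from (\ref{uno})).

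For the initial conditions, I would evaluate $(t)^{(k)}=t(t+1)\cdots(t+k-1)$ at $t=0$: for $k\geq 1$ the first factor is $0$, while the empty product gives $(0)^{(0)}=1$. Combined with $p^{0}=1$, this yields $q_0(0)=1$ and $q_k(0)=0$ for $k\geq 1$.

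There is no substantial obstacle here; the proof is essentially a calculation. The only point needing care is handling the $\Gamma(t)$ factor in the denominator at $t=0$ when verifying the initial condition: it is cleaner to use the rising-factorial form $(t)^{(k)}$ rather than the ratio $\Gamma(t+k)/\Gamma(t)$ near $t=0$, since $\Gamma$ has a simple pole there. An alternative route would be to exploit the mixed Poisson representation (\ref{coxx}) and differentiate $q_k(t)=\int_{0}^{\infty}e^{-\lambda}\lambda^{k}f_{\Gamma}(\lambda,t)\,d\lambda/k!$ under the integral sign using the equation (\ref{res}) for $f_{\Gamma}$, but this detours through the shift operator and ultimately reproduces the same digamma identity, so the direct approach above is more economical.
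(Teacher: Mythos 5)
Your proposal is correct and follows essentially the same route as the paper: rewriting the binomial coefficient as a rising factorial, differentiating to obtain the $\ln p$ term and the digamma difference $\Psi^{(0)}(t+k)-\Psi^{(0)}(t)$, telescoping that difference into $\sum_{j=0}^{k-1}\frac{1}{t+j}$, and checking the initial conditions via $(0)^{(k)}$. The framing as logarithmic differentiation rather than the product rule is only a cosmetic difference.
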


We now define a fractional version of the NB process by substituting in (\ref%
{coxx}) the standard Gamma subordinator with its fractional counterpart
defined in (\ref{pr6}). Thus we set%
\begin{equation}
\left\{
\begin{array}{l}
M_{\nu }(t):=N_{1}(\Gamma _{\nu }(t)),\text{\qquad }\nu \in (0,1) \\
\overline{M}_{\nu }(t):=N_{1}(\overline{\Gamma }_{\nu }(t)),\qquad \nu \in
(1,\infty )%
\end{array}%
\right. ,  \label{af}
\end{equation}%
for any $t\geq 0.$ For $\nu =1$, $M_{\nu }(t):=M(t).$

From (\ref{pr}) and (\ref{sec}), it is clear that
\begin{equation}
\left\{
\begin{array}{l}
M_{\nu }(t)\overset{d}{=}M(\mathcal{L}_{\nu }(t))\text{,\qquad }\nu \in (0,1)
\\
\overline{M}_{\nu }(t)\overset{d}{=}M(\mathcal{A}_{1/\nu }(t))\text{,\qquad }%
\nu \in (1,\infty )%
\end{array}%
\right. .  \label{coxx2}
\end{equation}%
Unfortunately, due to the presence of non-constant coefficients in (\ref%
{lem1}), we are not able to derive its fractional analogue. By resorting to
the compound Poisson representation, it is proved in \cite{BEG2} that, for $%
\nu <1$, the distribution $q_{k}^{\nu }(t):=\Pr \{M_{\nu }(t)=k\}$ satisfies
the following birth-type (or Kolmogorov forward) fractional equations%
\begin{equation}
\left\{
\begin{array}{l}
D_{t}^{\nu }q_{0}^{\nu }=\ln pq_{0}^{\nu } \\
D_{t}^{\nu }q_{k}^{\nu }=\ln pq_{k}^{\nu }-\ln p\sum_{i=1}^{k}\frac{(1-p)^{i}%
}{i\ln p}q_{k-i}^{\nu },\qquad k>0%
\end{array}%
\right.  \label{vec}
\end{equation}%
with initial conditions%
\begin{equation*}
q_{0}^{\nu }(0)=1,\quad q_{k}^{\nu }(0)=0\text{, for all integer }k>0,
\end{equation*}%
Analogously, for $\nu >1$, the distribution $\overline{q}_{k}^{\nu }(t):=\Pr
\{\overline{M}_{\nu }(t)=k\}$ satisfies
\begin{equation}
\left\{
\begin{array}{l}
\mathcal{D}_{-,t}^{\nu }q_{0}^{\nu }=-\ln pq_{0}^{\nu } \\
\mathcal{D}_{-,t}^{\nu }q_{k}^{\nu }=-\ln pq_{k}^{\nu }+\ln p\sum_{i=1}^{k}%
\frac{(1-p)^{i}}{i\ln p}q_{k-i}^{\nu },\qquad k>0%
\end{array}%
\right.  \label{vec2}
\end{equation}%
with initial conditions%
\begin{equation*}
\overline{q}_{0}^{\nu }(0)=1,\quad \overline{q}_{k}^{\nu }(0)=0\text{, for
all integer }k>0.
\end{equation*}%
By applying the results of section 2, we can obtain alternative equations
satisfied by the distribution of the NB process and its fractional versions
defined in (\ref{af}), in terms of the shift and fractional shift operators.

\begin{theorem}
The distribution of the NB process $M$ solves the following fractional
differential equations, for $t>1,$%
\begin{equation}
e^{-D_{t}}q_{k}=\frac{1}{p}q_{k}-\frac{1-p}{p}q_{k-1},\qquad k\geq 0,
\label{gam3}
\end{equation}%
with initial condition%
\begin{equation*}
q_{0}(0)=1,\quad q_{k}(0)=0\text{, for any integer }k>0
\end{equation*}%
and $q_{-1}(t)=0$, for any $t.$
\end{theorem}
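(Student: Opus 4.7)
The standard shift operator satisfies $e^{-D_{t}}f(t)=f(t-1)$ by (\ref{shi}) with $c=-1$, so I would first recast the claim as the equivalent functional recurrence
\[
q_{k}(t-1)=\frac{1}{p}q_{k}(t)-\frac{1-p}{p}q_{k-1}(t),\qquad k\geq 0,\ t>1,
\]
using the convention $q_{-1}\equiv 0$. Multiplying by $p$ and rearranging, this is
\[
q_{k}(t)=p\,q_{k}(t-1)+(1-p)\,q_{k-1}(t),
\]
a classical two-variable recurrence for the negative binomial distribution.

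The cleanest verification is via the probability generating function. Setting $G(u,t):=\sum_{k\geq 0}q_{k}(t)u^{k}$, the explicit form (\ref{uno}) together with the generalized binomial theorem gives
\[
G(u,t)=\left(\frac{p}{1-(1-p)u}\right)^{t},
\]
so that $G(u,t-1)=\dfrac{1-(1-p)u}{p}\,G(u,t)$. Multiplying the proposed recurrence by $u^{k}$ and summing over $k\geq 0$ produces exactly this identity, because $\sum_{k}q_{k-1}(t)u^{k}=u\,G(u,t)$. Uniqueness of the power series expansion then yields the equation for every $k$.

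Alternatively, and perhaps more transparently, I would verify the recurrence directly from the closed form: each of the three terms carries a common factor $p^{t}(1-p)^{k}$, and after cancelling it the identity reduces to Pascal's rule for generalized binomial coefficients,
\[
\binom{t+k-1}{k}=\binom{t+k-2}{k}+\binom{t+k-2}{k-1},
\]
which holds for any real $t$ (and in particular for $t>1$, ensuring $q_{k}(t-1)$ is well defined).

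The initial conditions $q_{0}(0)=1$ and $q_{k}(0)=0$ for $k>0$ are immediate from (\ref{uno}) since $(0)^{(0)}=1$ and $(0)^{(k)}=0$ for $k\geq 1$. I do not anticipate any genuine obstacle: the only delicate point is the passage $e^{-D_{t}}q_{k}(t)=q_{k}(t-1)$, which requires the series definition (\ref{shi}) to converge to the pointwise translation, but this is guaranteed because $p^{t}$ and $\binom{t+k-1}{k}=(t)^{(k)}/k!$ are entire functions of $t$, making $q_{k}(\cdot)$ analytic and hence equal to its Taylor expansion around any base point.
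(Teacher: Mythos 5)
Your argument is correct, but it takes a genuinely different route from the paper. You exploit the fact that, on analytic functions, $e^{-D_{t}}$ is literally the translation $q_{k}(t)\mapsto q_{k}(t-1)$ (justified, as you note, because $q_{k}(t)=\frac{(t)^{(k)}}{k!}p^{t}(1-p)^{k}$ is entire and of exponential type in $t$), and then verify the resulting two-variable recurrence $q_{k}(t)=p\,q_{k}(t-1)+(1-p)\,q_{k-1}(t)$ directly from the closed form (\ref{uno}), either via the generating function $\bigl(p/(1-(1-p)u)\bigr)^{t}$ or by reducing to Pascal's rule for generalized binomial coefficients. The paper instead never uses the explicit form of $q_{k}$: it starts from the mixed-Poisson representation (\ref{coxx}), writes $q_{k}(t)=\int_{0}^{\infty}p_{k}(z)f_{\Gamma}(z,t)\,dz$, pushes $e^{-D_{t}}$ onto the Gamma density, substitutes the governing equation (\ref{res}) with $b=p/(1-p)$, integrates by parts, and finishes with the Kolmogorov equation $\frac{d}{dz}p_{k}=-(p_{k}-p_{k-1})$ for the Poisson marginals. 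Your approach is more elementary and self-contained for this particular statement; what the paper's subordination argument buys is that it transfers verbatim to the fractional cases (\ref{gam5})--(\ref{gam6}), where $q_{k}^{\nu}$ has no closed form and the operators $\mathcal{O}_{-1,t}^{\nu}$ and $\overline{\mathcal{O}}_{-1,t}^{\nu}$ are not translations, so a direct verification of your kind would not be available there.
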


\begin{proof}
From (\ref{coxx}) we can write that%
\begin{equation}
q_{k}(t)=\int_{0}^{+\infty }p_{k}(z)f_{\Gamma }(z,t)dz,  \label{gam4}
\end{equation}%
where $p_{k}(t):=\Pr \{N_{1}(t)=k\}$ is the distribution of a Poisson
process with intensity $1$. By applying to (\ref{gam4}) the shift operator (%
\ref{shi}) with $c=-1$, we get, for any $k\geq 0,$%
\begin{eqnarray}
e^{-D_{t}}q_{k}(t) &=&\int_{0}^{+\infty }p_{k}(z)e^{-\partial _{t}}f_{\Gamma
}(z,t)dz  \label{dim} \\
&=&[\text{by (\ref{res}) with }b=p/(1-p)]  \notag \\
&=&\frac{1-p}{p}\int_{0}^{+\infty }p_{k}(z)\frac{\partial }{\partial z}%
f_{\Gamma }(z,t)dz+\int_{0}^{+\infty }p_{k}(z)f_{\Gamma }(z,t)dz  \notag \\
&=&[\text{integrating by parts}]  \notag \\
&=&\frac{1-p}{p}\left[ p_{k}(z)f_{\Gamma }(z,t)\right] _{z=0}^{z=+\infty }-%
\frac{1-p}{p}\int_{0}^{+\infty }\frac{\partial }{\partial z}%
p_{k}(z)f_{\Gamma }(z,t)dz+q_{k}(t)  \notag \\
&=&[\text{by (\ref{gam})-(\ref{res})}]  \notag \\
&=&-\frac{1-p}{p}\int_{0}^{+\infty }D_{z}p_{k}(z)f_{\Gamma }(z,t)dz+q_{k}(t),
\notag
\end{eqnarray}%
which coincides with (\ref{gam3}), by considering the well-known equation
satisfied by the Poisson distribution.
\end{proof}

By applying theorems 3 and 6, we obtain the fractional equations satisfied
by the distribution of the fractional NB processes defined in (\ref{coxx2}).

\begin{theorem}
The distribution $q_{k}^{\nu }(t)$ satisfies the following fractional
differential equations:%
\begin{equation}
\mathcal{O}_{-1,t}^{\nu }q_{k}^{\nu }=\frac{1}{p}q_{k}^{\nu }-\frac{1-p}{p}%
q_{k-1}^{\nu },\qquad k\geq 0,\text{ }t>1,\qquad \nu \in (0,1)  \label{gam5}
\end{equation}%
(where $\mathcal{O}_{-1,t}^{\nu }$ is defined in (\ref{fr1})). The
distribution $\overline{q}_{k}^{\nu }(t)$ satisfies%
\begin{equation}
\overline{\mathcal{O}}_{-1,t}^{\nu }\overline{q}_{k}^{\nu }=\frac{1}{p}%
\overline{q}_{k}^{\nu }-\frac{1-p}{p}\overline{q}_{k-1}^{\nu },\qquad k\geq
0,\text{ }t>1,\qquad \nu >1  \label{gam6}
\end{equation}%
(where $\overline{\mathcal{O}}_{-1,t}^{\nu }$ is defined in (\ref{fr2})) The
initial conditions for both equations are
\begin{equation*}
q_{0}(0)=1,\quad q_{k}(0)=0\text{, for any integer }k>0
\end{equation*}%
and $q_{-1}(t)=0$, for any $t,$ where we set $q:=q^{\nu }$, for $\nu <1$ and
$q:=\overline{q}^{\nu }$, for $\nu >1.$
\end{theorem}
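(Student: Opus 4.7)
The plan is to mimic the method used in the proof of the preceding theorem (the non-fractional NB case), but substituting the equations satisfied by the fractional Gamma densities from Theorems 3 and 6 in place of equation (\ref{res}). Since the process is defined via (\ref{af}) as $M_{\nu}(t)=N_{1}(\Gamma_{\nu}(t))$ (and analogously for $\overline{M}_{\nu}$), the distribution admits the mixture representation
\begin{equation*}
q_{k}^{\nu}(t)=\int_{0}^{\infty}p_{k}(z)f_{\Gamma_{\nu}}(z,t)\,dz,\qquad \overline{q}_{k}^{\nu}(t)=\int_{0}^{\infty}p_{k}(z)f_{\overline{\Gamma}_{\nu}}(z,t)\,dz,
\end{equation*}
where $p_{k}(z):=\Pr\{N_{1}(z)=k\}$ is the Poisson distribution with intensity one. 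This is the exact analogue of (\ref{gam4}) and reduces the problem to transferring an operator in $t$ across the integral.

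First I would treat the case $\nu\in(0,1)$. Apply $\mathcal{O}_{-1,t}^{\nu}$ termwise to the mixture, passing it inside the integral (justified by the series definition (\ref{fr1}) once convergence is granted), and use Theorem 3 with the parameter choice $b=p/(1-p)$, which yields
\begin{equation*}
\mathcal{O}_{-1,t}^{\nu}f_{\Gamma_{\nu}}(z,t)=f_{\Gamma_{\nu}}(z,t)+\tfrac{1-p}{p}\tfrac{\partial}{\partial z}f_{\Gamma_{\nu}}(z,t).
\end{equation*}
Then integrate by parts in $z$: the boundary contribution at $z=0$ vanishes by (\ref{pr5}), while the contribution at $z=+\infty$ vanishes by the decay of $f_{\Gamma_{\nu}}$. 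What remains is $-\int_{0}^{\infty}D_{z}p_{k}(z)f_{\Gamma_{\nu}}(z,t)dz$, which via the Kolmogorov forward equation $D_{z}p_{k}(z)=-p_{k}(z)+p_{k-1}(z)$ of the unit Poisson process rewrites as $q_{k}^{\nu}(t)-q_{k-1}^{\nu}(t)$. Collecting terms produces exactly (\ref{gam5}).

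For $\nu>1$, the argument is identical with $\mathcal{O}_{-1,t}^{\nu}$ replaced by $\overline{\mathcal{O}}_{-1,t}^{\nu}$, Theorem 3 replaced by Theorem 6, and the vanishing at $z=0$ justified by (\ref{sec5}); equation (\ref{sec4}) (again with $b=p/(1-p)$) gives the same algebraic relation between the operator action on $f_{\overline{\Gamma}_{\nu}}$ and its $z$-derivative, and the same integration by parts plus Poisson Kolmogorov identity delivers (\ref{gam6}). The stated initial conditions follow from the corresponding ones for the standard NB process, since at $t=0$ the subordinators $\mathcal{L}_{\nu}$ and $\mathcal{A}_{1/\nu}$ are concentrated at zero and $N_{1}(0)=0$ a.s.

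The main technical obstacle I expect is the interchange of the operator $\mathcal{O}_{-1,t}^{\nu}$ (respectively $\overline{\mathcal{O}}_{-1,t}^{\nu}$) with the integral in $z$: both operators are defined by an infinite series of iterated fractional derivatives in $t$, and one must justify termwise differentiation under the integral sign together with convergence of the resulting series. Once that is granted, the rest is the same integration by parts/Poisson recursion template used for the standard case in Theorem 13, and the boundary terms vanish cleanly thanks to (\ref{pr5}) and (\ref{sec5}).
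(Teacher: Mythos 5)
Your proposal is correct and follows essentially the same route as the paper: the mixture representation $q_{k}^{\nu}(t)=\int_{0}^{\infty}p_{k}(z)f_{\Gamma_{\nu}}(z,t)\,dz$, transfer of $\mathcal{O}_{-1,t}^{\nu}$ (resp.\ $\overline{\mathcal{O}}_{-1,t}^{\nu}$) under the integral via (\ref{pr4}) (resp.\ (\ref{sec4})) with $b=p/(1-p)$, then integration by parts and the Poisson forward equation. You in fact spell out the final integration-by-parts and recursion steps (and the interchange issue) more explicitly than the paper, which simply refers back to the computation in (\ref{dim}).
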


\begin{proof}
By (\ref{af}) we get, for $\nu <1$,.%
\begin{equation*}
q_{k}^{\nu }(t)=\int_{0}^{+\infty }p_{k}(z)f_{\Gamma _{\nu }}(z,t)dz,
\end{equation*}%
thus we can write that%
\begin{eqnarray*}
\mathcal{O}_{-1,t}^{\nu }q_{k}^{\nu } &=&\int_{0}^{+\infty }p_{k}(z)\mathcal{%
O}_{-1,t}^{\nu }f_{\Gamma _{\nu }}(z,t)dz \\
&=&\left[ \text{by (\ref{pr4}), with }b=p/(1-p)\right] \\
&=&\frac{1-p}{p}\int_{0}^{+\infty }p_{k}(z)\frac{\partial }{\partial z}%
f_{\Gamma _{\nu }}(z,t)dz+\int_{0}^{+\infty }p_{k}(z)f_{\Gamma _{\nu
}}(z,t)dz.
\end{eqnarray*}%
We obtain equation (\ref{gam5}), by some steps similar to (\ref{dim}). The case $%
\nu >1$ can be treated analogously.

\
\end{proof}

As happens with the fractional VG process, by definition (\ref{coxx2}), it
is clear that, in the case $\nu >1$, also $\overline{M}_{\nu }$ is a L\'{e}%
vy process, since the same is true for $M$ and $\mathcal{A}_{1/\nu }$ is a
subordinator. The L\'{e}vy symbol can be obtain by evaluating the
characteristic function of the process:%
\begin{eqnarray*}
\mathbb{E}e^{iu\overline{M}_{\nu }(t)} &=&\sum_{k=0}^{\infty
}e^{iuk}q_{k}^{\nu }(t) \\
&=&\sum_{k=0}^{\infty }e^{iuk}\int_{0}^{+\infty }q_{k}(z)h_{1/\nu }(z,t)dz \\
&=&\int_{0}^{+\infty }\mathbb{E}e^{iuM(z)}h_{1/\nu }(z,t)dz \\
&=&\int_{0}^{+\infty }\left( \frac{p}{1-(1-p)e^{iu}}\right) ^{z}h_{1/\nu
}(z,t)dz \\
&=&\int_{0}^{+\infty }\exp \left\{ -z\log \frac{1-(1-p)e^{iu}}{p}\right\}
h_{1/\nu }(z,t)dz \\
&=&\exp \left\{ -t\left( \log \frac{1-(1-p)e^{iu}}{p}\right) ^{1/\nu
}\right\} .
\end{eqnarray*}%
Thus we get%
\begin{equation*}
\eta _{\overline{M}_{\nu }}(u)=-\left( \log \frac{1-(1-p)e^{iu}}{p}\right)
^{1/\nu },
\end{equation*}%
which coincides with%
\begin{equation*}
-\psi _{\mathcal{A}_{1/\nu }}(-\eta _{M}(u))=-\left( -\eta _{M}(u)\right)
^{1/\nu },
\end{equation*}%
where $\eta _{M}(u)=\log \frac{p}{1-(1-p)e^{iu}}$ (see formula (1.1) in \cite%
{KOZ2}).

For the reader's convenience we sum up the results on the L\'{e}vy symbols
obtained so far in the following table: recall that in all the cases below
it is $\nu >1.$ For $\nu =1$ we obtain the well-known L\'{e}vy symbols of
the corresponding non-fractional processes.

\

\begin{center}
$%
\begin{array}{ll}
\text{\textbf{Process}} & \text{\textbf{L\'{e}vy symbol}} \\
\text{Fractional Gamma }\overline{\Gamma }_{\nu } & \eta _{\overline{\Gamma }%
_{\nu }}(u)=-\left[ \log (1-iu/b)\right] ^{1/\nu } \\
\text{Fractional VG }\overline{X}_{\nu } & \eta _{\overline{X}_{\nu }}(u)=-%
\left[ \log \left( 1+\frac{u^{2}}{2}\right) \right] ^{1/\nu } \\
\text{Fractional GS }\overline{G}_{\alpha }^{\nu } & \eta _{\overline{G}%
_{\alpha }^{\nu }}(u)=-\left[ \log \left( 1+(-iu)^{\alpha }\right) \right]
^{1/\nu } \\
\text{Fractional NB }\overline{M}_{\nu } & \eta _{\overline{M}_{\nu }}(u)=-%
\left[ \log \frac{1-(1-p)e^{iu}}{p}\right] ^{1/\nu }%
\end{array}%
$
\end{center}

\


\begin{thebibliography}{99}
\bibitem{APPL} \noindent \textbf{Applebaum D. (2009)},\textbf{\ }\emph{L\'{e}%
vy Processes and Stochastic Calculus}, Cambridge Studies in Advanced
Mathematics, Cambridge.

\bibitem{BABU} \noindent \textbf{Babusci D., Dattoli G., Sacchetti D}.
\textbf{(2010)}, Integral equations, fractional calculus and shift operator,
\emph{arXiv}: 1007.5211v1 [math-ph].

\bibitem{BEG} \noindent \textbf{Beghin L. (2013)}, Geometric stable
processes and fractional differential equation related to them, \emph{arXiv }%
1304.7915v1 [math-ph], submitted.

\bibitem{BEG1} \noindent \textbf{Beghin L., Macci C. (2012)}, Alternative
forms of compound fractional Poisson processes, \emph{Abstract Appl. Anal.},
2012, 1-30.

\bibitem{BEG2} \noindent \textbf{Beghin L., Macci C. (2013)}, Fractional
discrete processes: compound and mixed Poisson representations, \emph{Journ.
Appl. Probab.,} in press.

\bibitem{BOG} \noindent \textbf{Bogdan K., Byczkowski T., Kulczycki T.,
Ryznar M., Song R., Vondracek Z. (2009)}, \emph{Potential Analysis of Stable
Processes and its Extensions}\textbf{,} Lecture Notes in Math., Editors
P.Graczyk, A.Stos, Elsevier.

\bibitem{CHOI} \noindent \textbf{Choi, J., Srivastava, H.M., Kim, Y. (2001)}%
, Applications of a certain family of hypergeometric summation formulas
associated with Psi and Zeta functions, \emph{Comm. Korean Math. Soc., }16,
319-332.

\bibitem{CON} \noindent \textbf{Cont R., Tankov P. (2008)} \emph{Financial
Modelling with Jump Processes}, 2nd Ed., Chapman \& Hall / CRC Press.

\bibitem{DATT} \noindent \textbf{Dattoli G., Ricci P.E., Sacchetti D}.
\textbf{(2003)}, Generalized shift operators and pseudo-polynomials of
fractional order, \emph{Appl. Math. Comp.,} 141, 215-224.

\bibitem{DOV} \noindent \textbf{D'Ovidio M.} \textbf{(2011),} On the
fractional counterpart of the higher-order equations, \emph{Stat. Prob. Lett.%
}, 81, (12), 1929-1939.

\bibitem{HAH} \noindent \textbf{Hahn M. G., Kobayashi K., Umarov, S. }
\textbf{(2011), }Fokker-Plank-Kolmogorov equations associated with
time-changed fractional Brownian motion. \emph{Proceed. of the Amer. Math.
Soc.}, 139, (2), 691--705.

\bibitem{JUM} \noindent \textbf{Jumarie, G. (2009)}, Probability calculus of
fractional order and fractional Taylor's series application to Fokker-Plank
equation and information of non-random functions, \emph{Chaos, Solitons and
Fractals, }40, 1428-1448.

\bibitem{KIL} \noindent \textbf{Kilbas A.A., Srivastava H.M., Trujillo J.J.
(2006), }\emph{Theory and Applications of Fractional Differential Equations}%
, vol. 204 of North-Holland Mathematics Studies, Elsevier Science B.V.,
Amsterdam.

\bibitem{KOT} \noindent \textbf{Kotz S., Kozubowski T.J., Podg\`{o}rski K.
(2001)}, \emph{The Laplace Distribution and Generalizations: A Revisit with
Applications to Communications, Economics, Engineering and Finance},
Birkhauser, Boston, MA.

\bibitem{LIN} \noindent \textbf{Linde W., Shi Z. (2004)}, Evaluating the
small deviations probabilities for subordinated L\'{e}vy processes, \emph{%
Stochastic Processes and their Applications, }113, 273-287.

\bibitem{MAD} \noindent \textbf{Madan D.B., Seneta E. (1990)}, The variance
gamma (V.G.) model for share market returns, \emph{Journ. Business}, 63,
511--524.

\bibitem{MIS} \noindent \textbf{Mi\v{s}kinis P. (2012), }On Integral
Representation of the Translation Operator, \emph{Mathematical Modelling and
Analysis}, 17, (1), 100-112.

\bibitem{KOZ3} \noindent \textbf{Kozubowski T.J.(1999), }Univariate
Geometric Stable Laws, \emph{Journ. Comp. Anal. Appl.}, 1, (2), 177-217.

\bibitem{KOZ} \noindent \textbf{Kozubowski T.J., Meerschaert M.M., Podg\`{o}%
rski K. (2006)}, Fractional Laplace motion, \emph{Adv. Appl. Probab., }38,
451-464.

\bibitem{KOZ2} \noindent \textbf{Kozubowski, T.J., Podg\'{o}rski, K. (2009),}
Distributional properties of the Negative Binomial L\'{e}vy process. \emph{%
Probab. Math. Statist.} 29, 43--71.

\bibitem{OSL} \noindent \textbf{Osler, T J. (1971)} Taylor's series
generalized for fractional derivatives and applications. \emph{SIAM J. Math.
Anal.}, 2, 37--48.

\bibitem{SAMO} \noindent \textbf{Samorodnitsky G., Taqqu M.S. (1994)} \emph{%
Stable Non-Gaussian Random Processes}, Chapman and Hall, New York.

\bibitem{SIK} \noindent \textbf{Sikic H., Song R., Vondracek Z. (2006)},
Potential theory of geometric stable processes, \emph{Probab. Theory Relat.
Fields,} 135, 547--575.

\bibitem{TRUJ} \noindent \textbf{Trujillo J. J., Rivero M., Bonilla, B.
(1999)} On a Riemann--Liouville generalized Taylor's formula, \emph{J. Math.
Anal}. \emph{Appl}., 231, 255--265.
\end{thebibliography}
\end{document}